\newtheorem{theorem}{Theorem}[section]
\newtheorem{proposition}[theorem]{Proposition}
\newtheorem{lemma}[theorem]{Lemma}
\newtheorem{corollary}[theorem]{Corollary}
\theoremstyle{definition}
\newtheorem{definition}[theorem]{Definition}
\author{Gregory R. Chambers}
\author{Yevgeny Liokumovich}
\begin{document}

\title[Converting homotopies to isotopies
and dividing homotopies in half]{Converting homotopies to isotopies
and dividing homotopies in half in an effective way}

\newcommand{\R}{\mathbb R}

\maketitle

\begin{abstract}
We prove two theorems about homotopies of curves on $2$-dimensional Riemannian manifolds.
We show that, for any $\epsilon > 0$, if two simple closed curves are homotopic 
through curves of bounded length $L$,
then they are also isotopic through curves of length bounded by $L + \epsilon$. 
If the manifold is orientable, then for any $\epsilon > 0$ we show that, if we can contract a curve $\gamma$ traversed twice through
curves of length bounded by $L$, then we can also contract $\gamma$ through 
curves bounded in length by $L + \epsilon$.

Our method involves cutting curves at their self-intersection points and reconnecting them
in a prescribed way. We consider the space of all curves obtained in this way from the original homotopy, and use a novel approach 
to show that this space contains a path which yields the desired homotopy.
\end{abstract}

\section{Introduction}

This article is devoted to proving two theorems about homotopies of curves on 
a $2$-dimensional Riemannian manifold.  
Throughout this article,
if $\gamma$ is a homotopy, then we denote the curve at time $t$ by $\gamma_t$.

\begin{theorem}
\label{thm:baer_quantified}
Let $M$ be a $2$-dimensional Riemannian manifold with or without boundary, and let $\gamma_0$
and $\gamma_1$ be non-contractible simple closed curves which are homotopic through
curves bounded in length by $L$ via a homotopy $\gamma$.
For any $\epsilon > 0$, we then have that there is an isotopy $\overline{\gamma}$ from $\gamma_0$ to $\gamma_1$ through curves of length at most $L + \epsilon$.
Here, a simple curve is one with no self-intersections, and an isotopy is a homotopy through embeddings (simple
curves).
\end{theorem}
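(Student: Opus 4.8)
The qualitative content here is Baer's theorem — freely homotopic essential simple closed curves on a surface are isotopic — so the whole difficulty is the length bound. The plan is to manufacture the isotopy $\overline\gamma$ directly from $\gamma$ by cutting and reconnecting the curves $\gamma_t$ at their self-intersections. First I would put $\gamma$ into generic form: after a $C^{\infty}$-small perturbation, which alters every length by less than $\epsilon/3$, the homotopy is smooth, each $\gamma_t$ is a generic immersed curve with only transverse double points, $\gamma_t$ is embedded for $t$ near $0$ and near $1$, and there are only finitely many \emph{exceptional} times $0<t_1<\dots<t_k<1$ at which the combinatorial picture changes by a single elementary move — birth or death of a monogon, birth or death of a bigon, or a triple point. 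That a general homotopy reduces to such moves is a standard transversality argument; since $\gamma$ is a fixed compact family it also yields a uniform bound $N$ on the number of self-intersections of $\gamma_t$.

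For a generic immersed curve $c$, define its resolution $R(c)$ by cutting $c$ at each of its double points and reconnecting the four emanating strands in the prescribed, orientation-compatible way (the smoothing used to form Seifert circles), joining each reconnected pair by a short arc inside an arbitrarily small $\delta$-ball about the double point. Then $R(c)$ is an embedded multicurve with $\ell(R(c))\le \ell(c)+CN\delta$, so choosing $\delta$ small makes this at most $\ell(c)+\epsilon/3\le L+2\epsilon/3$. On each interval free of exceptional times the double points of $\gamma_t$ move continuously, so $t\mapsto R(\gamma_t)$ is an isotopy of embedded multicurves; across an exceptional time $t_i$ the multicurve $R(\gamma_t)$ changes by a controlled surgery — a null-homotopic circle is created or destroyed, two components are band-summed, or a component splits. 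Now every $\gamma_t$ is freely homotopic to the essential primitive curve $\gamma_0$, and the crux will be a lemma asserting that one can coherently single out, for all $t$, a distinguished component $\sigma_t$ of $R(\gamma_t)$ carrying the free homotopy class of $\gamma_0$, and that every surgery at a $t_i$ touching $\sigma_t$ is a band-sum or pinch with a null-homotopic circle, hence changes $\sigma_t$ only by an isotopy. Concatenating these isotopies, and using $\sigma_0=\gamma_0$ and $\sigma_1=\gamma_1$ (both curves being already embedded), produces the desired isotopy $\overline\gamma$ through embedded curves of length at most $L+\epsilon$.

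The main obstacle is exactly this coherent choice of distinguished component: a naive pointwise recipe need not be consistent across the elementary moves, and one must rule out, using simplicity of $\gamma_0$, the appearance of two ``competing'' essential components that could be band-summed into something not isotopic to $\gamma_0$. This is what forces the global viewpoint — considering the whole space of curves obtainable from $\gamma$ by resolving arbitrary subsets of self-intersections, and extracting a good path inside it rather than following one resolution rule time-by-time. A secondary technical point is to run the construction when $M$ is non-orientable or has boundary: a two-sided essential curve has an orientable neighborhood, so the oriented smoothing is still available locally, a one-sided curve requires the analogous smoothing adapted to its normal bundle, and a collar argument keeps all resolved curves off $\partial M$.
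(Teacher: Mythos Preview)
Your setup (generic perturbation, Reidemeister moves, resolving crossings) matches the paper, but the central mechanism is different and your version has a genuine gap. You commit to a \emph{single} resolution rule (the oriented/Seifert smoothing at every double point) and then need the ``crux lemma'' that the resulting multicurve $R(\gamma_t)$ has a distinguished essential component $\sigma_t$ which changes only by isotopy across each elementary move. You do not prove this lemma, and in fact a fixed smoothing rule cannot be pushed through: the paper's Example~1 (Jonguk Yang) and Example~2 exhibit homotopies in which, after resolving according to any fixed rule, a $-R2$ move leaves the resolved curve in the configuration of Figure~\ref*{fig:problematic}, where no forward move is available. The fix is not a local patch but a change of resolution: one must backtrack in time and re-resolve some earlier crossings with the \emph{opposite} sign (the $M2b$ move).

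Accordingly, the paper does not track a single resolution. It builds a finite graph $\Gamma$ whose vertices are \emph{all} connected resolutions of each $\gamma_{t_j}$ (allowing either sign at each crossing) and whose edges are the modified moves of Figures~\ref*{fig:F-M} and \ref*{fig:M2b}. The key observation is a parity count: every vertex at an intermediate level has even degree (2, 4, or 6), so by the handshaking lemma the connected component containing $\gamma_0$ must contain a second odd-degree vertex, necessarily a resolution of $\gamma_1$. The resulting path in $\Gamma$ \emph{is} the isotopy. Your last paragraph gestures at this (``considering the whole space of curves\ldots and extracting a good path''), but that sentence is the entire content of the proof, not a technicality to be filled in.

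There is also a second gap you do not address: the isotopy produced this way ends at $\pm\gamma_1$, and one must argue the sign is $+$. On orientable $M$ this uses that $\gamma_1$ and $-\gamma_1$ are not freely homotopic when $\gamma_1$ is non-contractible. On non-orientable $M$ the paper invokes Epstein's Lemma~2.3 to reduce to $\mathbb{R}P^2$ and the Klein bottle, then lifts to the orientation double cover and runs a degree argument to pin down the orientation; your one-line remark about one-sided curves does not cover this.
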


Theorem \ref*{thm:baer_quantified} can be viewed as a quantitative version of a classical theorem of 
R. Baer and D. B. A. Epstein. In \cite{Ba}, Baer proved that if two non-contractible 
closed curves on an orientable closed surface are homotopic then they are isotopic.
Epstein extended this result to non-orientable surfaces and surfaces with boundary
in \cite{E}.

Our second main theorem answers a question of R. Rotman.

\begin{theorem}
\label{thm:no_torsion_quantified}
Let $M$ be an orientable $2$-dimensional smooth manifold with or without boundary.
  Fix a closed curve $\alpha$ on $M$, and
define $2 \alpha$  to be the curve formed by traversing $\alpha$ twice.  If $2 \alpha$ can be
contracted through curves of length at most $L$, then for any $\epsilon > 0$, $\alpha$ can also
be contracted through curves
of length at most $L + \epsilon$.
\end{theorem}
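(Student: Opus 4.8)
The plan is to reduce the statement to a connectivity question about a $1$-manifold built from the self-intersections of the given homotopy, and to attack that question with a parity count whose last step is the crux. Write $\gamma$ for the given contraction of $2\alpha$, so $\gamma_0=2\alpha$, $\gamma_1$ is a point, and $\operatorname{length}(\gamma_t)\le L$. First I would spend $\epsilon/2$ of the length budget making everything generic: perturb $\alpha$ to an immersed curve with finitely many transverse self-intersections, none over the basepoint $\alpha(0)$; replace $\gamma_0=2\alpha$ by the curve $\gamma_0'$ obtained from $x\mapsto\alpha(2x)$ by pushing off along the pulled-back normal field with amplitude $\epsilon_0\sin(2\pi x)$, which has exactly $4k+1$ transverse double points (here $k$ is the number of self-intersections of $\alpha$), among them a distinguished one $\delta_0$ over $\alpha(0)$ at which the two halves $\gamma_0'|_{[0,1/2]}$ and $\gamma_0'|_{[1/2,1]}$ are $C^0$-close to $\alpha$; and perturb $\gamma$ rel endpoints to a generic path of immersed curves with finitely many kink, tangency, and triple-point events at distinct times, keeping $\gamma_1$ a point and making the curve shrink monotonically near $t=1$. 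Orientability enters here: $\pi_1(M)$ is torsion-free, so $\alpha$ is contractible and the halves above are null-homotopic.

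Now form $\widehat D=\{(t,\{x,y\}):\gamma_t(x)=\gamma_t(y)\}$; its honest-double-point part $\Gamma$ is a compact $1$-manifold with boundary, with $\partial\Gamma$ consisting of the $4k+1$ double points of $\gamma_0'$ together with the kink (Reidemeister-$\mathrm{I}$) events --- the instants when a loop of $\gamma_t$ collapses to a point --- while tangencies and triple points are interior points of $\Gamma$ (generically $\Gamma$ does not meet $t=1$). Along any arc in $\Gamma$, cutting $\gamma_t$ at the self-intersection produces a pair of loops varying continuously, each of length $\le\operatorname{length}(\gamma_t)\le L+\epsilon/2$, and at a kink event one of the two loops is exactly the collapsing loop, hence shrinks to a point. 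Consequently, an arc in $\Gamma$ joining $\delta_0$ to a kink event yields a contraction of $\alpha$ through curves of length at most $L+\epsilon$: track the half that collapses, and prepend a short homotopy from $\alpha$ to its value at $\delta_0$, which is $C^0$-close to $\alpha$. So the task is to find such an arc.

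For embedded $\alpha$ this is immediate and is the model: then $k=0$, $\gamma_0'$ has the single double point $\delta_0$, and the component of $\Gamma$ through $\delta_0$ is an arc whose other endpoint --- a boundary point of $\Gamma$ that is neither another $t=0$ double point (there is none) nor a $t=1$ point (none) --- must be a kink event. In general, the free homotopy class of the ``first half'' is locally constant along $\Gamma$, splitting its components into \emph{good} ones (that half null-homotopic) and the rest; $\delta_0$ lies on a good component, and so does every kink event, since a collapsing loop is null-homotopic. A short calculation with the cyclic word $\alpha\cdot\alpha$, using $[\alpha]=1$ in $\pi_1(M)$, shows that the four double points of $\gamma_0'$ created near each self-intersection of $\alpha$ are all good or all bad, so the number of good $t=0$ double points equals $1+4m$ for some $m\ge 0$ and is odd; as the good part of $\Gamma$ is a compact $1$-manifold with an even number of boundary points, there are an odd (in particular positive) number of kink events.

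The final step, which I expect to be the main obstacle and to be exactly the point of the paper's ``novel approach,'' is to promote this count to the assertion that the component of $\Gamma$ through $\delta_0$ \emph{itself} runs into a kink event, rather than dead-ending at some other good $t=0$ double point --- and to do so without violating the length bound, which rules out the naive fix of restarting the homotopy at a different double point. I would look for this either by sharpening the construction of $\gamma_0'$ so that $\delta_0$ becomes the only good $t=0$ double point, or by enlarging $\widehat D$ with the transitions obtained from re-cutting a half at one of its own self-intersections (the ``reconnecting in a prescribed way''), making the enlarged space connected enough, and then verifying that each such transition is realized by a homotopy of $\alpha$ through curves of length $\le L+\epsilon$. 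The remaining ingredients --- transversality for the genericity, the local models at kinks, tangencies and triple points, reparametrizations, and the $\epsilon$-bookkeeping --- are routine.
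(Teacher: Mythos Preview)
Your framework coincides with the paper's: perturb $2\alpha$ to a curve $\beta$ with $4k+1$ transverse double points and a distinguished one $\delta_0$ over the basepoint, regard the double points of $\gamma_t$ as a compact $1$-manifold $\Gamma$, observe that an arc from $\delta_0$ to an $R1$ kink event yields the desired contraction through subcurves, and try to force such an arc by a parity count. The gap you flag is precisely the crux, and neither of your two suggested repairs is what the paper actually does.

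The paper's fix is local to $t=0$ and uses no further structure of the homotopy. Your perturbation (call it $\sigma$) pushes the \emph{second} copy of $\alpha$ off along the normal field while fixing the first; define a companion $\overline{\sigma}$ that instead pushes the \emph{first} copy off and fixes the second. Both end at $\beta$, up to a half-turn reparametrization of the domain circle. Concatenating $\sigma^{-1}$ with $\overline{\sigma}$ gives a short homotopy from $\beta$ to itself, through curves $\epsilon$-close to $2\alpha$, under which the four double points in each cluster undergo two transpositions: the two ``same-copy'' crossings swap, and the two ``cross-copy'' crossings swap. Following either half along this homotopy realizes each swap as a homotopy of subcurves of length $\le L+\epsilon$. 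Now augment $\Gamma$ by adding, for every cluster, the two edges recording these swaps. In the augmented graph every $t=0$ vertex except $\delta_0$ acquires degree exactly $2$ (one forward-in-time edge plus one swap edge), while $\delta_0$ keeps degree $1$; the handshake lemma then forces the other end of $\delta_0$'s arc to be a kink event (or an intersection at $t=1$, which is harmless since $\gamma_1$ is a point), and the proof closes.

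Your good/bad dichotomy and the $1+4m$ count are correct but idle: they show some good arc ends at a kink, not that $\delta_0$'s does, and once the swap edges are in place the dichotomy is never invoked. Your guess ``enlarge $\widehat D$ by re-cutting a half at one of its own self-intersections'' is headed in the right direction but is more elaborate than needed; the paper adds only these four-per-cluster $t=0$ edges, coming from the single explicit homotopy $\sigma^{-1}*\overline{\sigma}$, and nothing else. Your other guess, reshaping $\gamma_0'$ so that $\delta_0$ is the unique good $t=0$ double point, cannot work: goodness is a free-homotopy invariant of the half, unaffected by small perturbations.
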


This result can be viewed as a quantitative version of
the fact that an orientable surface does not have elements of
order 2 in its fundamental group.

This theorem is related to an open problem due to N. Hingston and H.-B. Rademacher
(see Remark 3.4 in \cite{HR} and Question 3.2.1 in \cite{BM}).
They ask whether there exists a metric on $S^n$ and
a homology class $X$ of the free loop space $\Lambda S^n$
such that the minimax level of $mX$ is strictly smaller
than the minimax level of $X$, where $m$ is an integer.
A parametric version of Theorem \ref*{thm:no_torsion_quantified} would yield
a negative answer to this question in the first non-trivial case
when $X$ is the fundamental class and $m=n=2$.

The proof of Theorem \ref*{thm:baer_quantified} follows from a more general, purely topological theorem.
To state it, we will first need a definition.

\begin{definition}
\label{defn:epsilon_close}
Let $-\gamma$ denote the curve $\gamma$ with reversed orientation. 
Two curves $\alpha$ and $\beta$ are \textit{$\epsilon-$image equivalent}, $\alpha \sim_\epsilon \beta$,
 if there exits a finite collection of disjoint intervals 
 $\bigsqcup_{i=1} ^n I_i \subseteq S^1$, such that
$length(\alpha(S^1 \setminus \bigsqcup I_i)) + length(\beta(S^1 \setminus \bigsqcup I_i)) < \epsilon$.  We also require
that there exists a permutation $\sigma$ of $\{1, \dots, n \}$ and a map $f:\{1 , \dots, n\} \rightarrow \{0,1\}$,
 such that $\alpha|_{I_i}= (-1)^{f(i)} \beta|_{I_{\sigma(i)}}$ for all $i$.
\end{definition}

The topological theorem can now be stated:

\vspace{2mm}

\noindent \textbf{Theorem $\bm{1.1'}$.}\emph{
Suppose $\gamma$ is a smooth homotopy of closed curves on a 2-manifold $M$
and $\gamma_0$ is a simple closed curve.
Then, for every $\epsilon > 0$,
there exists an isotopy $\overline{\gamma}$ such that $\overline{\gamma}_0 = \gamma_0 $ and
$\overline{\gamma}_1$ is $\epsilon-$image equivalent to a small perturbation of $\gamma_1$. 
 Additionally, for every $t$ there exists a $t'$ such that
$\overline{\gamma}_t$ is $\epsilon-$image equivalent to a small perturbation of $\gamma_{t'}$.
If $\gamma_1$ is simple or is a point, then this homotopy also ends at $\gamma_1$, up to a change in orientation.
}
\vspace{2mm}

In fact, we give an explicit algorithm for constructing the isotopy $\overline{\gamma}$
by producing a continuous process that involves cutting
curves of $\gamma$ into arcs and then by gluing these arcs
together in such a way as to remove self-intersections.
Each curve $\overline{\gamma_t}$ is obtained from some curve 
$\gamma_{t'}$ through this cutting and regluing process, yielding the desired length bound for the isotopy.  In the isotopy that we produce,
there can be several curves $\{\overline{\gamma_{t_i}}\}$
obtained from the same curve $\gamma_{t'}$.

Theorem $1.1^\prime$ restricted to when $\gamma$ is a contraction is used in the work
of one of the authors (G. R. Chambers) and R. Rotman \cite{CR} to turn contractions of simple curves
through free loops of bounded length into contractions through based loops of bounded length.

As a side note, this statement also yields a new elementary proof of the Jordan-Schoenflies theorem (see Corollary \ref*{thm:jordan_schoenflies}).

Theorem \ref*{thm:no_torsion_quantified} also follows from a more general topological theorem.  To state it, we again
first need a definition.

\begin{definition}
\label{defn:subcurves}
We say that the homotopy $\beta$ ``goes through subcurves of $\gamma$"
if, for every $\beta_t$, there is a connected subset $S \subset S^1$ and some $t'$
such that $\gamma_{t'}(S)$ is a closed curve that is equal to $\beta_t$ 
up to a small perturbation.
\end{definition}

Theorem \ref*{thm:no_torsion_quantified} now follows from:
\vspace{2mm}

\noindent \textbf{Theorem $\bm{1.2'}$.}\emph{
Let $\alpha$ be a closed curve on an orientable surface $M$ and $\gamma$
be a homotopy of $2 \alpha$ to a point.  
There then exists a homotopy $\overline{\gamma}$ that goes through subcurves
of $\gamma$ and contracts $\alpha$ to a point.
}

\vspace{2mm}

The rest of the article is structured as follows.  
In section 2 we prove Theorem $1.1^\prime$ and give a short proof
of Jordan-Schoenflies theorem.
In section 3 we prove Theorem \ref*{thm:baer_quantified}.  
In section 4 we prove Theorem $1.2^\prime$,
and discuss how it implies Theorem \ref*{thm:no_torsion_quantified}.  
In section 5 we discuss some higher-dimensional analogues to Theorem $1.2^\prime$.  This motivates the following
question, which is a generalization of Proposition \ref*{prop:3}:

\vspace{2mm}
\noindent \textbf{Question.} Does there exist a function $f(L)$ such that, for any Riemannian metric
on $S^3$, if $2 \alpha$ can be contracted to a point through curves of length $\leq L$,
then $\alpha$ can also be contracted through curves of length $\leq f(L)$? 

\vspace{0.2in}

\textbf{Acknowledgments}
We would like to thank
Alexander Nabutovsky and Regina Rotman for introducing us to the problems studied in this paper
and for many valuable discussions. We would like to thank Jonguk Yang for sharing Example 1 with us 
and Parker Glynn-Adey for telling us about the work of Baer and Epstein.

The authors were both supported by Natural Sciences and Engineering 
Research Council (NSERC) CGS graduate scholarships, and by Ontario Graduate Scholarships.

\section{Proof of Theorem $1.1^\prime$}

In this section, we will prove Theorem $1.1^\prime$.
Fix smooth, closed curves $\gamma_0$ and $\gamma_1$, with $\gamma_0$ simple.  Let $\gamma$ be a homotopy
from $\gamma_0$ to $\gamma_1$ that passes through curves of length at most $L$, and fix $\epsilon > 0$.  We first want to simplify this
homotopy so as to be more manageable - we want to show that we can perturb $\gamma$ to $\widetilde{\gamma}$ so that all curves still are smooth,
and so that the lengths of curves do not grow by more than $\epsilon$.  Additionally, all but finitely many $\widetilde{\gamma_t}$ will be immersed
curves with transverse self-intersections of multiplicity at most $2$, and each of the finitely many singular events that occur do not do so
concurrently and take one of the
the three forms depicted in Figure \ref*{fig:reidemeister_moves}.  We use the term \emph{Reidemeister moves} to refer to these interactions.  This
term is used because of the obvious similarities between them and the moves used in Reidemeister's Theorem in knot theory.

\begin{figure}[center] 
\includegraphics[scale=0.4]{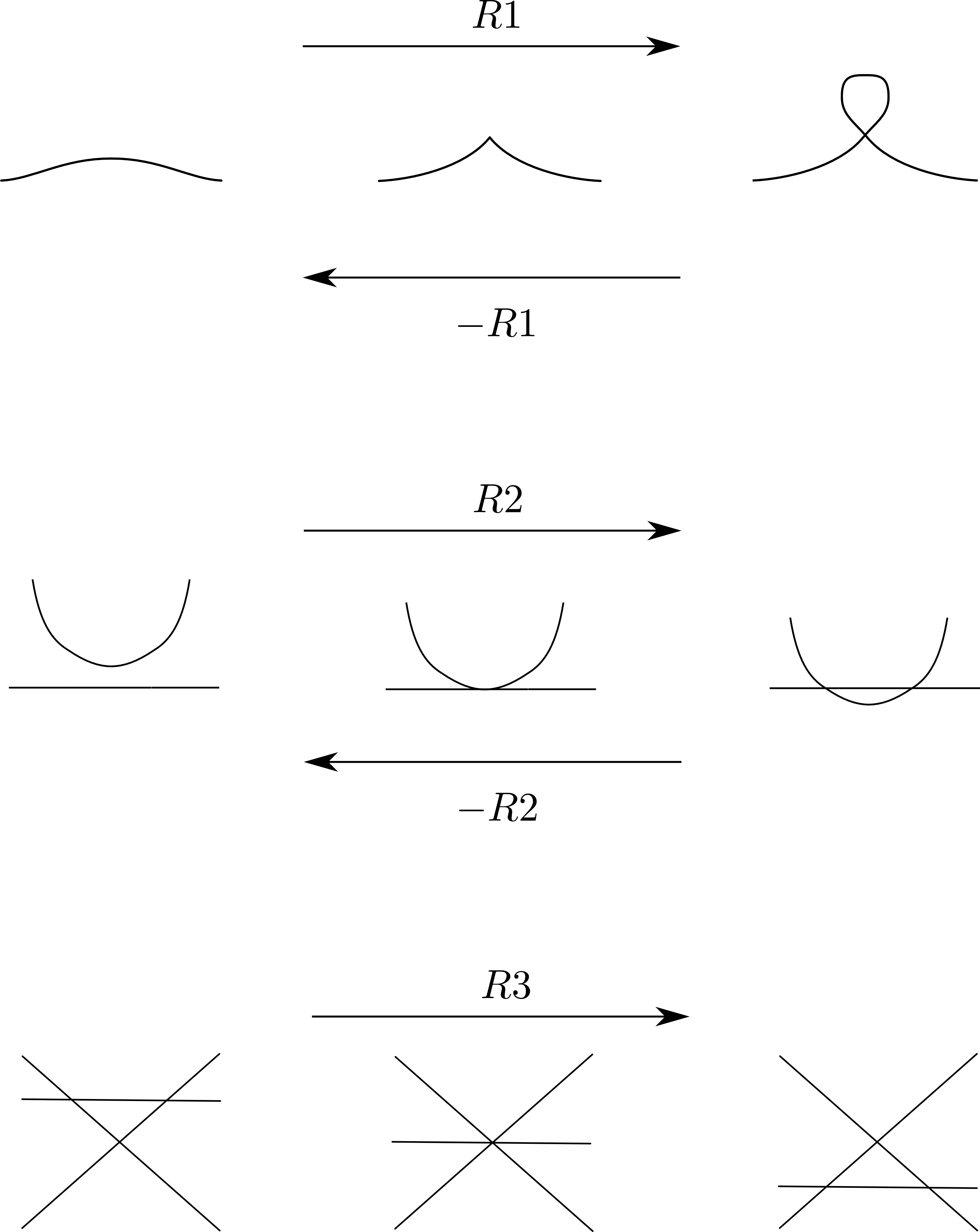} 
\caption{The 3 types of Reidemeister moves} \label{F-R}
\label{fig:reidemeister_moves}
\end{figure}

To prove this perturbation statement rigorously, we could approximate $\gamma$ by a piecewise linear homotopy
as it is often done in the proof of Reidemeister's Theorem in knot theory (cf. \cite{K}).
However, we will instead use standard results in transversality theory.  A good reference for this is
\cite{GG}.  We will say that a collection of homotopies of smooth closed curves in $M$ is \emph{generic} if
it is an open and dense subset of $C^{\infty}([0,1] \times S^1, M)$ with respect to the $C^\infty$ Whitney topology.
We will also say that a generic homotopy has a certain property if the set of homotopies with that property forms a generic set.

\begin{proposition}
\label{prop:generic}
A generic homotopy $\gamma$ has the following properties:
\begin{enumerate}
	\item	For all but finitely many times $t$, $\gamma_t$ is an immersion with only transverse self-intersections.  At each of these intersections,
		only 2 arcs meet.
	\item	The only singular events that can happen are the 3 Reidemeister moves shown in Figure \ref*{fig:reidemeister_moves}.
\end{enumerate}
\end{proposition}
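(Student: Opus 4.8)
The plan is to derive both assertions from Thom's (multi)jet transversality theorem (see \cite{GG}) by a sequence of dimension counts, and then to upgrade ``dense'' to ``open and dense'' using compactness of the domain together with the structural stability of the three local models. Throughout, regard the homotopy as a smooth map $H\colon[0,1]\times S^1\to M$, $H(t,\theta)=\gamma_t(\theta)$, so that the domain $N:=[0,1]\times S^1$ has dimension $2=\dim M$; for $k\ge 1$ write $N_k:=\{(t,\theta_1,\dots,\theta_k):\theta_1,\dots,\theta_k\in S^1\text{ pairwise distinct}\}$, a manifold of dimension $1+k$, and let $\Phi_k\colon N_k\to M^k$ be the ($\gamma$-dependent) map $(t,\theta_1,\dots,\theta_k)\mapsto(\gamma_t(\theta_1),\dots,\gamma_t(\theta_k))$. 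The three local models to aim for are the birth or death of a small loop (a momentary semicubical cusp), a self-tangency of two strands, and a triple point, i.e.\ the three moves of Figure \ref*{fig:reidemeister_moves}.

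For a generic $\gamma$ we arrange the following transversality conditions simultaneously. First, $j^2H$ is transverse to the codimension-$2$ stratum ``$\partial_\theta(\cdot)=0$'' of the relevant $2$-jet space; since this codimension equals $\dim N$, the set of points where $\gamma_t$ fails to be an immersion is finite, these points occur at distinct times, and at each of them $\gamma_t$ has a semicubical cusp whose versal one-parameter unfolding is the loop birth/death move. Second, the $1$-jet enhancement of $\Phi_2$ (adjoining the two tangent directions, which is defined away from the finitely many cusp points) is transverse to $\Delta_M\subset M^2$ and, within $\Phi_2^{-1}(\Delta_M)$, transverse to the locus where the two branches are tangent; coincidence of the images ($2$ conditions) together with tangency of the branches ($1$ condition) is codimension $3=\dim N_2$, so non-transverse double points occur at a finite set of parameters at distinct times, locally giving the self-tangency move, while all other double points are transverse. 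Third, $\Phi_3$ is transverse to the small diagonal $\Delta_M^{(3)}\subset M^3$, a codimension-$4$ condition on the $4$-manifold $N_3$, so triple points occur at finitely many parameters at distinct times, with three pairwise distinct tangent lines (requiring two of the three to agree is one further condition, so generically does not occur), giving the triple-point move. Fourth, $\Phi_4$ transverse to the small diagonal of $M^4$ is a codimension-$6$ condition on the $5$-manifold $N_4$, hence there are no quadruple points at all; in particular, for every $t$ outside the finite exceptional set produced above, $\gamma_t$ is an immersion whose only self-intersections are transverse double points, which is~(1). To see that the finitely many singular events are non-concurrent, run the same counts on fibre products over the common time parameter: a cusp together with a self-tangency, two self-tangencies, a self-tangency with a triple point, two triple points, and so on, each correspond to conditions whose codimension ($3+3$, $4+3$, $4+4$, etc.) strictly exceeds the dimension of the corresponding fibre product, hence are generically empty. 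This proves that a dense set of homotopies satisfies~(1) and~(2).

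It remains to check openness. Since $N$ is compact, the Whitney $C^\infty$ topology is the usual $C^\infty$ topology, so it suffices to see that properties~(1) and~(2) persist under $C^\infty$-small perturbations. Each of the three moves is a versal one-parameter unfolding --- of the planar cusp, of a Morse critical point of the double-point curve over $[0,1]$, and of a triple point, respectively --- hence is structurally stable: a sufficiently small perturbation of $\gamma$ produces near each original singular time exactly one event of the same type and creates no new ones (compactness of $N$ gives a uniform bound away from the non-generic set). Thus the set of homotopies with properties~(1) and~(2) is open as well as dense, i.e.\ generic.

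The one genuinely non-formal point is this last, openness, step: pure transversality yields only a residual set, so to match the definition of ``generic'' one must invoke the structural stability of the three local models (equivalently, that each is a versal deformation). The rest is a routine, if slightly lengthy, application of the multijet transversality theorem and dimension counting; the main care needed is in choosing the correct jet enhancements so that ``transverse self-intersection,'' ``tangency of branches,'' and ``distinct tangents at a triple point'' are each expressed as transversality to an explicit submanifold.
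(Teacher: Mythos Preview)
Your argument is essentially the same as the paper's: both rest on Thom's multijet transversality theorem and the obvious dimension counts in $J^0_3$, $J^0_4$, $J^1_2$, and $J^2$ to show that triple points, quadruple points, self-tangencies, and immersion failures are each generically finite (or empty). The paper is less self-contained at the $R1$ step, outsourcing the normal form for the isolated point where $\partial_\theta\gamma_t=0$ to the classification of generic one-parameter families of plane curves in Arnold \cite{A} and Dufour \cite{D}, whereas you sketch it directly via the semicubical cusp and its versal unfolding; the paper also defers non-concurrency to the next lemma (handled there by an ad~hoc local perturbation) rather than by your fibre-product dimension count. One small simplification to your openness paragraph: since $[0,1]\times S^1$ is compact and each stratum you use is closed in its jet bundle, each transversality condition is already open---not merely residual---so the finite intersection is open and dense without appealing to structural stability of the local models.
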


\begin{proof}
We start by proving that, for a generic homotopy of closed curves, 
there will be only finitely many curves with triple self-intersections.

Let $(S^1)^{(3)}$ be a collection of distinct triplets 
	$$ (S^1)^{(3)}=\{(s_1,s_2,s_3) \in T^3 | s_i \neq s_j \text{ for }i \neq j \}.$$
By definition (see \cite{GG}), the 3-fold 0-jet bundle $J_3 ^0 (S^1,M)$ is the 
$9-$dimensional manifold $(S^1)^{(3)} \times M^3$.
For each $t$, the map $\gamma_t$ induces a map 
$j_3 ^0 \gamma_t: (S^1)^{(3)}\rightarrow J_3 ^0 (S^1,M)$
defined by 
	$$ j_3 ^0 \gamma_t(s_1,s_2,s_3) = (s_1,s_2,s_3,\gamma_t(s_1),\gamma_t(s_2),\gamma_t(s_3)). $$

Consider the set $S=\{(s_1,s_2,s_3,y,y,y)\} \subset J_3 ^0 (S^1,M)$.
$S$ is a $5-$dimensional closed submanifold of $J_3 ^0 (S^1,M)$
and $\gamma_t$ has a triple self-intersection
$\gamma_t(s_1)=\gamma_t(s_2)=\gamma_t(s_3)$ if and only if $j_3 ^0 \gamma_t(s_1,s_2,s_3) \in S$.

Define $f: [0,1] \times (S^1)^{(3)}\rightarrow J_3 ^0 (S^1,M)$ by setting
$f(t,x)=j_3 ^0 \gamma_t(x)$. By a parametric version of Thom's Multijet Transversality Theorem
(proved in \cite{Bru}),
for a generic homotopy $\gamma$ the map $f$ will intersect
submanifold $S$ transversely. By dimensional considerations, along with the fact
that $S$ is closed, the intersection $f \pitchfork T$ will be
a finite collection of points. Hence, there will be only finitely many
triple self-intersections.
These triple self-intersections are depicted as an $R3$ move in Figure \ref*{fig:reidemeister_moves}. 

Similarly, by considering $J_4 ^0 (S^1,M)$ we obtain that curves in a
generic homotopy do not have self-intersections of multiplicity higher than $3$.

Next, we prove that there are only finitely many non-transverse self-intersections.
Consider the 2-fold 1-jet bundle $J_2 ^1 (S^1,M)$. Let $C$ be a submanifold
of $J_2 ^1 (S^1,M)$ consisting of points with equal $M$ components
and co-linear tangent vector components. We compute that this space has co-dimension $3$,
so in a generic homotopy there are going to be only finitely many
non-transverse double points.
Non-transverse double points correspond to moves of Type II depicted on 
Figure \ref*{fig:reidemeister_moves}. 

Finally, by considering the jet bundle $J ^2 (S^1,M)$,
we come to the conclusion that generically there will be finitely many 
points $(t,s)$ with $\nabla_s \gamma_t(s)=0$.
We would like to understand what a generic 1-parametric family looks 
like in the neighbourhood of such an isolated singular point.
Problems of this type are studied in bifurcation theory.
For a generic homotopy, it is possible to 
choose local coordinates around the singular point so that $\gamma$
in these coordinates can be expressed in a particular simple form called the normal form.
In \cite{A} and \cite{D}, normal forms for 1-parametric families of plane curves
are calculated.  It is also demonstrated that the only singular event that can happen
are R1 creations or destructions of a self-intersection, as in Figure \ref*{fig:reidemeister_moves}.

\end{proof}

We can now prove the desired approximation lemma:
\begin{lemma}
\label{lem:perturb}
Given a homotopy $\gamma$ from $\gamma_0$ to $\gamma_1$ and an $\epsilon > 0$, we can perturb $\gamma$ to get a homotopy
$\widetilde{\gamma}$ with the following properties:
\begin{enumerate}
	\item	$\widetilde{\gamma_t}$ and $\gamma_t$ differ in length by at most $\epsilon$, for every $t$.
	\item	There are only finitely many values of $t$ at which $\widetilde{\gamma_t}$ is not normal, that is,
		contains either non-transverse self-intersections, or intersections of multiplicity greater than 2.
	\item	Non-normal intersections do not happen concurrently, and each one looks like one of the Reidemeister
		moves shown in Figure \ref*{fig:reidemeister_moves}.
\end{enumerate}
If $\gamma_1$ contains only normal self-intersections, then $\widetilde{\gamma_1} = \gamma_1$.
\end{lemma}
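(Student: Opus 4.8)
The plan is to produce $\widetilde{\gamma}$ as a sufficiently small, suitably supported perturbation of $\gamma$ lying in the generic set supplied by Proposition~\ref{prop:generic}, and to observe that smallness in a strong enough topology delivers all three conclusions simultaneously. First I would fix the ambient function space. Regard $\gamma \in C^\infty([0,1]\times S^1, M)$ and note that the length functional $t \mapsto \mathrm{length}(\gamma_t)$ is controlled linearly by the $C^1$ size of the restriction of $\gamma$ to $\{t\}\times S^1$, uniformly in $t$ because $[0,1]$ is compact. Hence there is a $C^1$-neighbourhood $\mathcal{U}$ of $\gamma$ in $C^\infty([0,1]\times S^1, M)$ so that every $\gamma'\in\mathcal{U}$ satisfies $|\mathrm{length}(\gamma'_t) - \mathrm{length}(\gamma_t)| < \epsilon$ for all $t$. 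Since the $C^\infty$ Whitney topology refines the $C^1$ topology and the generic set of Proposition~\ref{prop:generic} is dense, I can choose $\widetilde{\gamma}\in\mathcal{U}$ generic. Each slice is then a smooth curve, and Proposition~\ref{prop:generic} gives property (1) together with: all but finitely many $\widetilde{\gamma}_t$ are immersions with transverse double points only, and every singular event is one of the three Reidemeister moves. This establishes (1) and all of (2) and (3) except the non-concurrency clause.

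Next I would enforce non-concurrency. The cleanest way is to think of the homotopy as a path in the space of closed curves $C^\infty(S^1,M)$: the non-normal curves form a subset whose top stratum, where exactly one Reidemeister-type degeneracy is present, has codimension one, while curves carrying two independent degeneracies lie in strata of codimension at least two. Upgrading the multijet transversality arguments of Proposition~\ref{prop:generic} — applying Thom's theorem and its parametric version from \cite{Bru} to the higher-order multijet bundles that detect two simultaneous degeneracies — shows that a generic path is transverse to, hence disjoint from, these codimension $\geq 2$ strata; after shrinking $\mathcal{U}$ so that this strengthened genericity also holds there, the finitely many singular times of $\widetilde{\gamma}$ are distinct with exactly one Reidemeister move at each, which is (3). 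A more hands-on alternative: by the normal-form description each singular event is confined to a small spatio-temporal box disjoint from the others, so one may add a cut-off perturbation (or a time-reparametrisation) supported in that box to translate the event to a new time, repeating finitely often; since the Reidemeister normal forms are stable, this keeps the perturbation $C^\infty$-small and disturbs no other event.

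Finally I would pin down the endpoints. Being ``normal'' is an open condition, so if $\gamma_0$ is simple and $\gamma_1$ has only normal self-intersections there is a $\delta>0$ with no singular events on $[0,\delta]\times S^1$ or on $[1-\delta,1]\times S^1$, and no perturbation is needed there. I would therefore carry out all of the perturbations above relative to a neighbourhood of $\{0,1\}\times S^1$, using the relative version of Thom's transversality theorem with perturbations supported in $(\delta, 1-\delta)\times S^1$. This leaves $\gamma_0$ and $\gamma_1$ untouched, giving $\widetilde{\gamma}_1 = \gamma_1$ whenever $\gamma_1$ is normal (and $\widetilde{\gamma}_0 = \gamma_0$ always), so that $\widetilde{\gamma}$ is still a homotopy from $\gamma_0$ to $\gamma_1$.

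I expect the main obstacle to be the non-concurrency clause: one must either set up the higher multijet bundles carefully enough to see that two simultaneous Reidemeister events constitute a codimension $\geq 2$ phenomenon and invoke the parametric transversality theorem of \cite{Bru} there, or else verify that the normal-form neighbourhoods of distinct events are genuinely disjoint boxes, so that the event-by-event time translations are independent and remain small. By comparison, the length estimate, the density of the generic set, and the endpoint-fixing via a relative perturbation are routine once the correct function-space topology has been fixed.
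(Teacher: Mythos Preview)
Your proposal is correct and follows essentially the same approach as the paper: invoke Proposition~\ref{prop:generic} for a $C^\infty$-small generic perturbation, then separately handle non-concurrency by local perturbations and endpoint-fixing by restricting the support of the perturbation. In fact you supply considerably more detail than the paper does---the explicit $C^1$ control of the length functional, the codimension-$\geq 2$ stratification argument as an alternative to the hands-on local time shifts, and the appeal to relative transversality for fixing $\gamma_1$---whereas the paper dispatches these points in a sentence each.
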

\begin{proof}
This lemma is mostly an application of Proposition \ref*{prop:generic}.  By this proposition, we can find a perturbation which
satisfies all of the above criteria, except the facts that non-normal interactions do not occur concurrently, and that, if
$\gamma_1$ is normal, then $\widetilde{\gamma_1} = \gamma_1$.  Since singular interactions occur only finitely many times,
we can perform small local perturbations to ensure that they do not occur concurrently.
If $\gamma_1$ has only normal self-intersections, then we see that the above perturbations can be chosen so that $\gamma_1$
is not perturbed at all, completing the proof of this lemma.
\end{proof}

With a slight abuse of notation, let us assume that the homotopy $\gamma$ is already perturbed; that
the length of $\gamma_t$ is less than or equal to $L$ for all $t$, and there exists a finite sequence of times
	$$0 = t_0 < \dots < t_n = 1$$
such that, between $t_i$ and $t_{i+1}$, exactly one of the interactions depicted in Figure \ref*{fig:reidemeister_moves} occurs, and
$\gamma_{t_i}$ possesses only normal self-intersections.
We wish to find a homotopy $\overline{\gamma}$ such that $\overline{\gamma}_0 = \gamma_0$, $\overline{\gamma}$ is composed
of simple, closed curves, and for each $t$, there is a $t'$ such that $\overline{\gamma_t}$ is $\epsilon$-image equivalent to $\gamma_{t'}$.

Our strategy to generate this new homotopy  
will be to substitute each of the Reidemeister moves in Figure \ref*{fig:reidemeister_moves} with a move that does not
introduce self-intersections, as in Figure \ref*{fig:F-M}. Observe that there are 3 possibilities for move $M3b$.

\begin{figure}[center]
\includegraphics[scale=0.7]{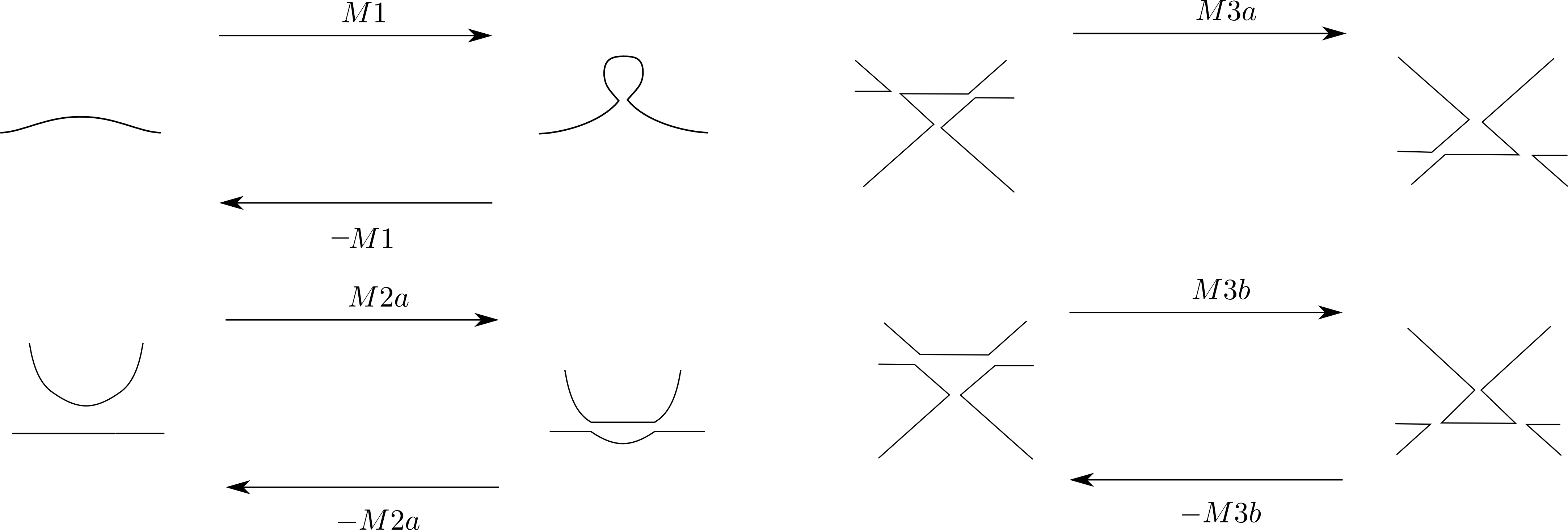} 
\caption{Reidemeister moves modified so as to avoid creating intersections} \label{fig:F-M}
\end{figure}

There is a problem with this naive approach: one of Reidemeister moves cannot be modified
in this way so as to allow the homotopy to move forward and still be intersection-free.
We illustrate the problem and solution with two examples below.

\vspace{0.1in}

\textbf{Example 1.} Jonguk Yang created an example shown in Figure \ref*{fig:jonguk}.
One can substitute the first two Reidemeister moves with moves from Figure \ref*{fig:F-M}, but not the second to last move.
More generally, it is not clear how to proceed when the move $-R2$ occurs in the original homotopy 
and the modified homotopy looks locally like Figure \ref*{fig:problematic}.

\begin{figure}[center]
\includegraphics[scale=0.5]{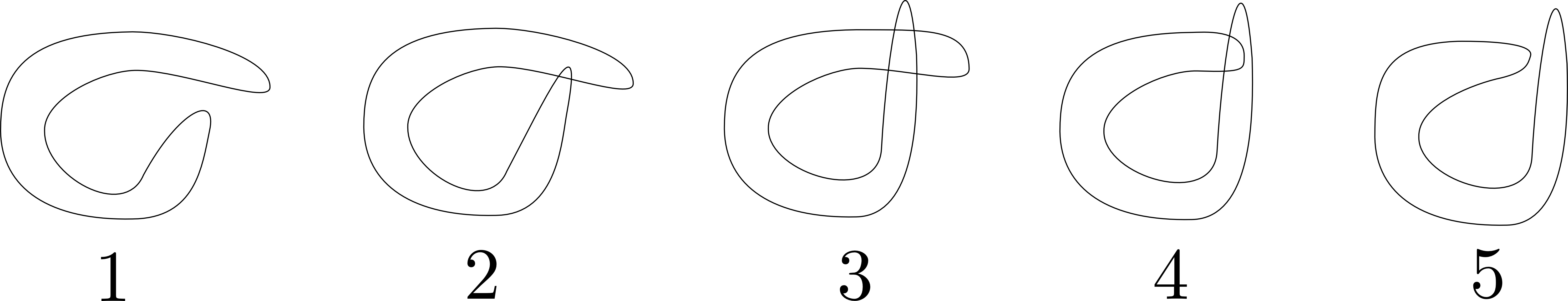} 
\caption{Jonguk Yang's example} \label{fig:jonguk}
\end{figure}

\begin{figure}[center]
\includegraphics[scale=0.5]{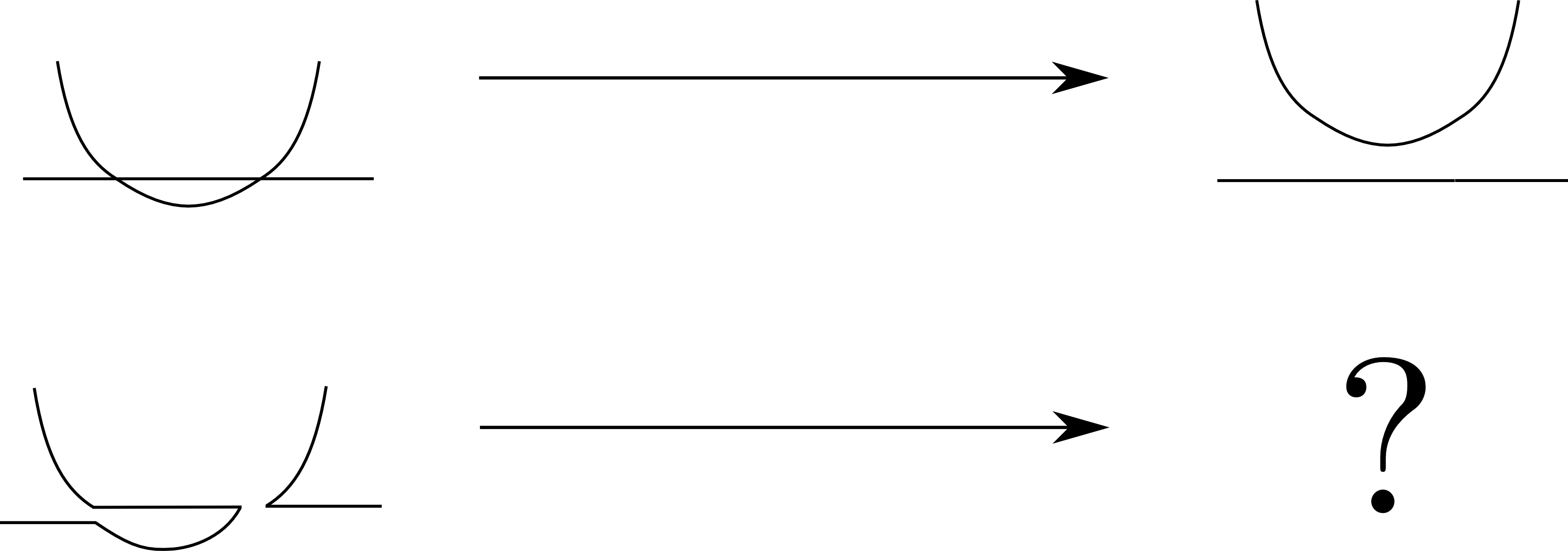} 
\caption{Problematic move} \label{fig:problematic}
\end{figure}

Our solution for Jonguk Yang's example is as follows. Let $t_1$, $t_2$, $t_3$ and $t_4$
be the moments in time when tangential touchings occur in the original homotopy (in chronological order).
We resolve the first two singularities using $M2a$ moves as usual, as in Figure \ref*{fig:F-M}. We then evolve our modified curve until
$t_3$, and then change direction in time, homotoping it back to the contour of the curve at $t_2$,
but changing the way it is connected.  This is shown in Figure \ref*{fig:solution}. We continue to evolve this curve backwards in time
until we get to $t_1$, at which time we apply this technique again, reversing the direction in time and homotoping it forward.
When we reach time $t_3$ again, there is no problem with applying 
move $-M2a$ to the modified curve. At $t_4$, there is also no problem with using a regular $-M2a$ move.
In the end, we resolved the problem by applying the move depicted in Figure \ref*{fig:M2b} twice,
a move that we will refer to as $M2b$.

\begin{figure}[center] 
\includegraphics[scale=0.5]{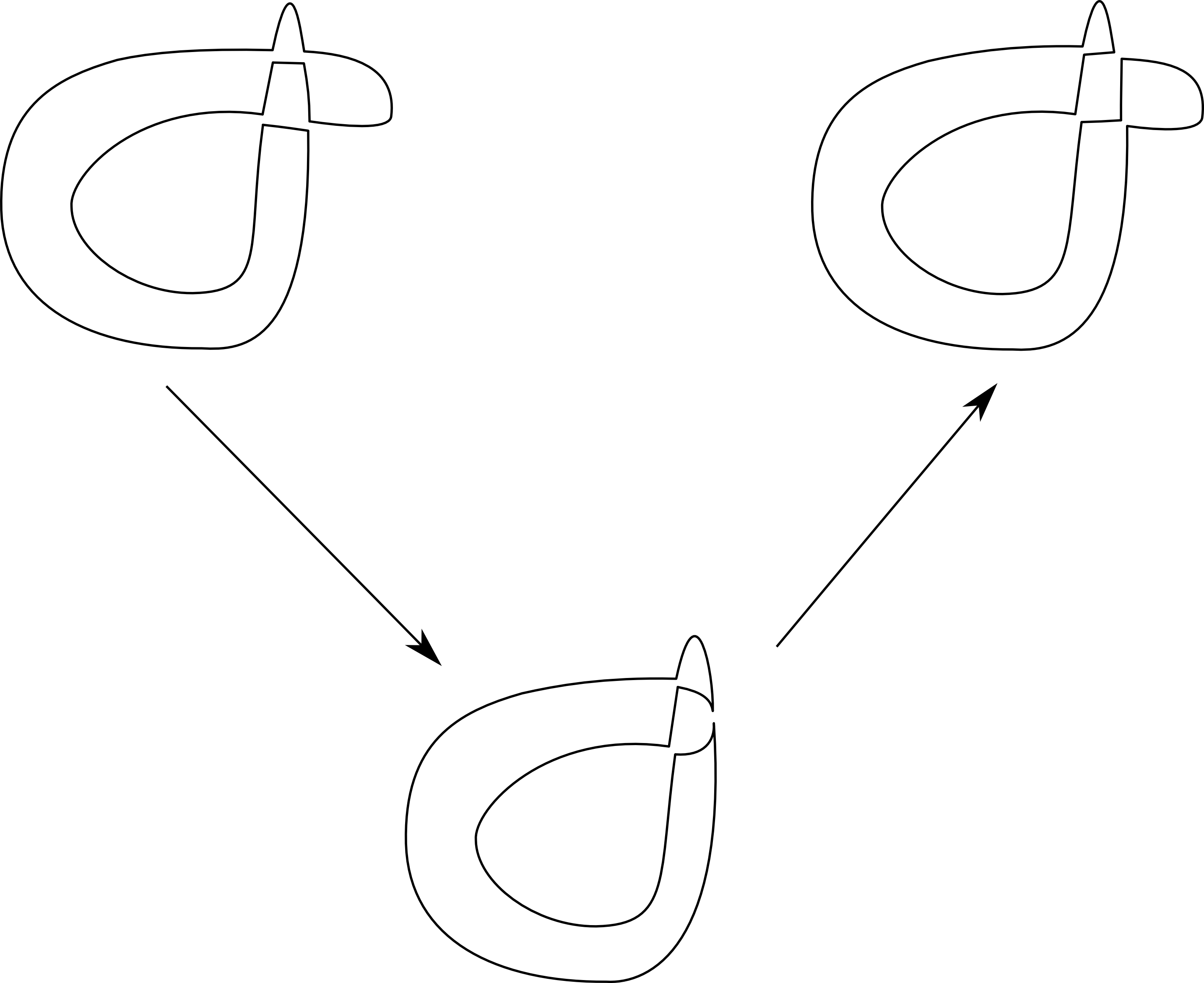} 
\caption{Our solution for Jonguk Yang's example} \label{fig:solution}
\end{figure}

\begin{figure}[center]
\includegraphics[scale=0.7]{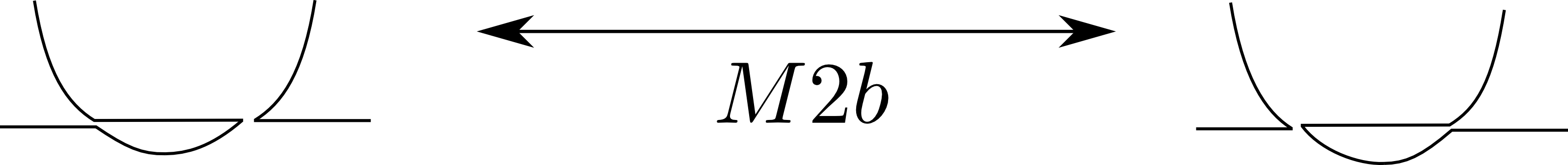} 
\caption{Move $M2b$} \label{fig:M2b}
\end{figure}

\vspace{0.1in}
\textbf{Example 2.} Blindly applying $M2b$ move whenever we encounter a problem
does not always work, as the example in Figure \ref*{fig:example2} illustrates.  

\begin{figure}[center] 
\includegraphics[scale=0.7]{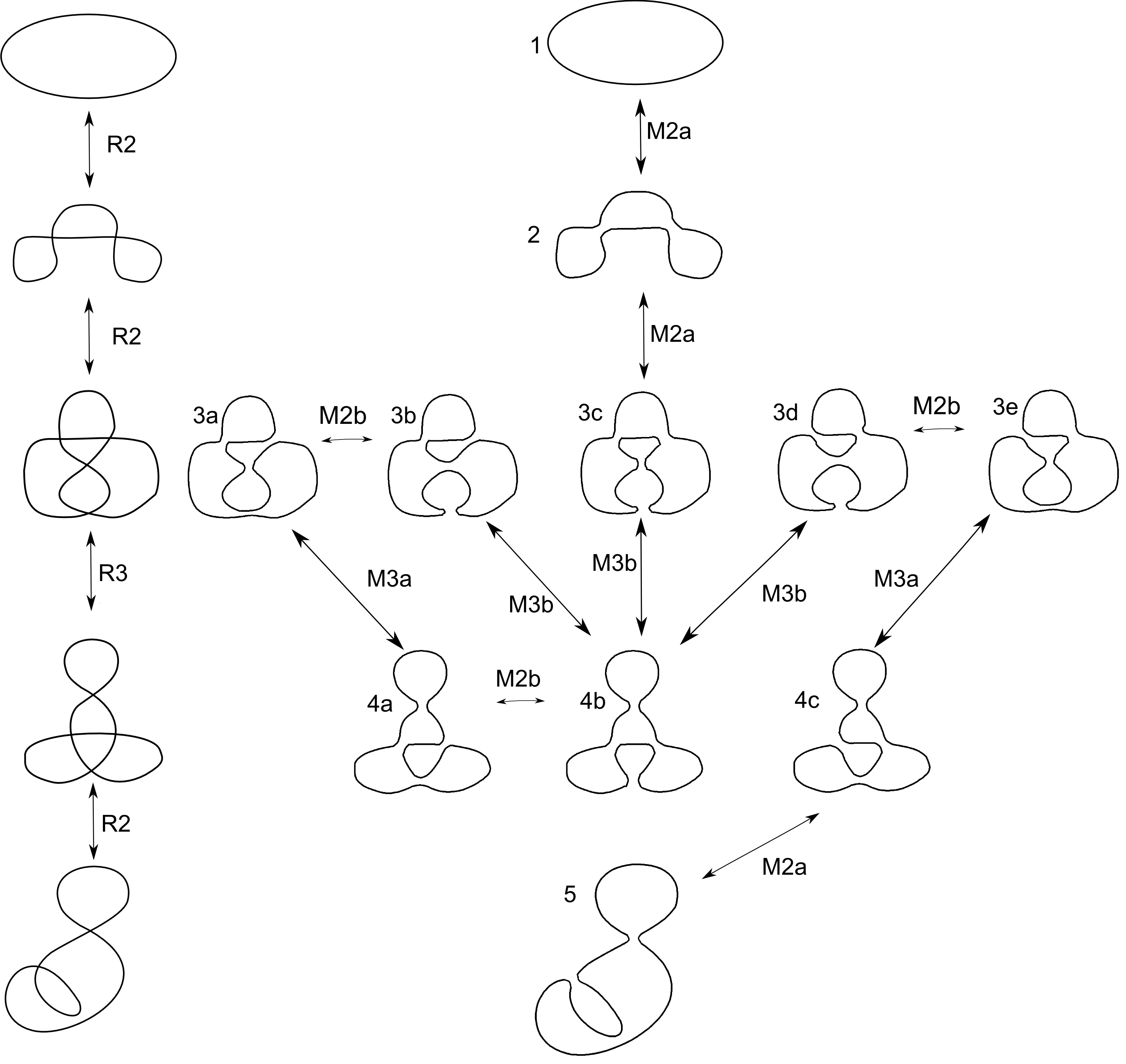} 
\caption{Example 2} \label{fig:example2}
\end{figure}

In the left column on Figure \ref*{fig:example2} there are
five curves from a homotopy that starts on a simple closed curve and
then goes through Reidemeister moves $R2$, $R2$, $R3$, $-R2$. 
On the right we consider all possible ways to redraw these curves
in such a way so that they are connected and do not have self-intersections.
There are 5 different connected redrawings for the third curve
and 3 different redrawings for the fourth curve.

We would like to construct an isotopy from the initial simple curve
to the redrawing of the last curve. By applying moves $M2a$, $M2a$ and $-M3b$
we obtain an isotopy to the curve $4b$, but then we encounter a problematic $-R2$ move as 
in Figure \ref*{fig:problematic}. By applying move $M2b$ we can homotope $4b$ to $4a$,
 but then after moves $M3a$, $M2b$, $-M3a$
we get back to $4b$.
The solution is to use one of $-M3b$ moves to homotope $4b$ to $3d$, followed by a 
sequence of moves that homotope it to the curve $5$.

Observe that all curves, except for $1$ and $5$,
have an even number of arrows pointing to them.
This is a key observation that will be used in the proof below.
\vspace{0.1in}

We now have to show that this technique is versatile enough to handle any issue that might arise.

We now define a procedure of ``resolving" the self-intersections of a curve
$\gamma_{t_j}$.  Let $p$ be a self-intersection point of $\gamma_{t_j}$.
Consider a small ball $B$ around $p$. If we erase all points in $B$ 
from $\gamma_{t_j}$, we obtain two disconnected curves 
$\gamma_{t_j} \setminus (B \cap \gamma_{t_j})$, each with two endpoints.
We can reconnect the endpoints in 
one of the two ways shown in Figure \ref*{fig:resolution}, removing the self-intersection $p$.
If we execute this procedure for every self-intersection of $\gamma_{t_j}$, we either obtain
a single simple closed curve which is $\epsilon$-image equivalent to $\gamma_{t_j}$, or several disconnected simple
closed curves.  We only consider sets of reconnections which result in single simple closed curves, and will call
such a set a \emph{resolution}.

We define a sign of
a reconnection in a resolution as follows. Fix an orientation of the curve. Consider a small neighbourhood 
of the intersection and let $a_1$ and $a_2$ denote two intersecting arcs in this neighbourhood.
If we remove a small ball around the intersection point, each $a_i$ is cut into two subarcs $a_i ^1$ and $a_i ^2$, 
where the order of subarcs is defined by the orientation of $\gamma_{t_j}$. If the resolution
connects $a_1 ^1$ to $a_2 ^2$ we call it positive, and if it connects $a_1^1$ to $a_2^1$ we all it negative.
Observe that this definition of sign is independent of the choice of orientation of the curve.

\begin{figure}[center]
\includegraphics[scale=0.5]{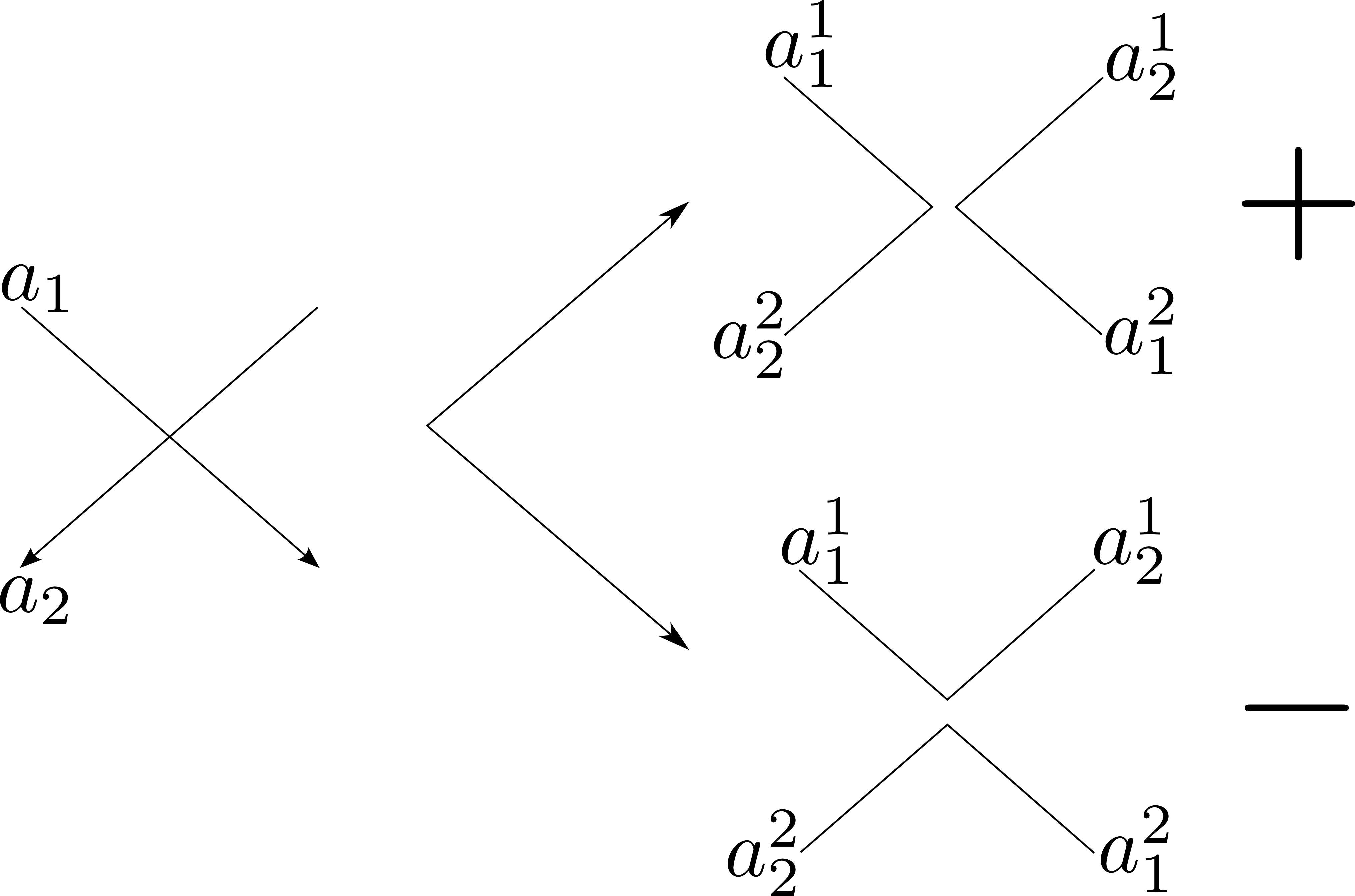} 
\caption{Two ways to reconnect a curve} \label{fig:resolution}
\end{figure}

The isotopy we construct will consist of curves that are obtained from the curves
of the homotopy by resolving all of their self-intersections in the way just described.  
By selecting the balls around
the intersections to be small enough, we ensure that the resolved curves 
are $\epsilon$-image equivalent to the original
curves.

Let $\{G^j _i \}$ denote the set of all connected curves that can be obtained from $\gamma_{t_j}$
by resolving all of its self-intersections. 
Construction of the isotopy relies on connectedness properties of a certain finite graph $\Gamma$.
For each $G^j_i$, we add a vertex to $\Gamma$, and label it ``$G^j_i$".
We will now define edges of $\Gamma$.
It will follow from how these edges are added to $\Gamma$ that, if vertices with labels $G^j _i$
and $G^l_k$ are connected by an edge, then there exists an isotopy 
between corresponding resolutions of $\gamma_{t_j}$ and
$\gamma_{t_l}$ through curves that are $\epsilon-$image equivalent to some
subset of curves in the original homotopy, hence achieving the desired length bound.
We will show that there exists a path in this graph from the vertex corresponding to curve $\gamma_0$ to
a vertex corresponding to a simple closed curve 
which is $\epsilon$-image equivalent to $\gamma_1$.  Since this path
corresponds to an isotopy, we will be done.

All edges that are added to $\gamma$ are either between $G^j_i$ and $G^{j+1}_k$, for $j < n$,
or between $G^j_i$ and $G^j_k$ for $i \neq k$.  We call the first type of edges vertical edges,
and the second type of edges horizontal edges.

\noindent \textbf{Vertical edges}

Let $R_j$ denote the Reidemeister move between times $t_j$ and $t_{j+1}$, $j < n$.
If $R_j= \pm R1$, we connect vertices $G^j _i$ and $G^{j+1}_k$ by an edge 
if the corresponding curves are related by the move $\pm M1$.

If $R_j= R3$, we connect $G^j _i$ and $G^{j+1}_k$ if they are related by $M3a$.
If $G^j_i$ is related to $G^{j+1}_{k_1}$ by $M3b$, then there  must be two other distinct curves
$G^{j+1}_{k_2}$ and $G^{j+1}_{k_3}$ related to $G^j_i$ by $M3b$.
This defines 3 edges in $\Gamma$.
Similarly, if $G^{j+1}_i$ is related to $G^j_{k_1}$ by $-M3b$, then there must be two other curves $G^j_{k_2}$ and $G^j_{k_3}$ related to $G^{j}_i$ by $-M3b$.
This also defines $3$ edges.

If $R_j= \pm R2$ we connect two vertices if the corresponding curves are related by a
$\pm M2$ move. If $G^j_i$ has intersections resolved as in Figure \ref*{fig:problematic} 
and $R_j$ removes intersections in a $-R2$ move, then we do not connect $G^j_i$ to any vertex on the level corresponding to $j + 1$. Similarly, for a curve 
$G^{j+1} _i$ that has intersections resolved as in Figure \ref*{fig:problematic} and created by an $R2$ move, 
we do not connect $G^{j+1}_i$ to any vertex on the $j$th level. Instead, we will connect these vertices with
 a vertex on the same level by a horizontal edge.

\noindent \textbf{Horizontal edges}

Consider a curve $G^{j} _i$ with intersections $a$ and $b$
resolved as in Figure \ref*{fig:problematic}.  We then have that there exists a curve $G^{j}_k$,
which is identical to $G^{j}_i$ except that 
the corresponding intersections have both been resolved with opposite signs. If the preceding move
was an $R2$ creation of $a$ and $b$ we connect $G^j_k$ and $G^j_i$ by an edge. Similarly, if the next move
is a $-R2$ destruction of $a$ and $b$ we also connect $G^j_k$ and $G^j_i$ by an edge. Observe that both of these
may happen. In that case, vertices $G^{j} _i$ and $G^{j} _k$ are connected by 2 edges.

We can now complete the proof of Theorem $1.1^\prime$.
Observe that, by construction, if $j \not \in \{ 0, n \}$, then $G^{j}_i$ is the endpoint of an even number of 
edges.  More specifically, it will be the endpoint of 2, 4, or 6 edges, depending on
the preceding and following Reidemeister moves. Let $\Gamma_0$ be the connected component of the 
graph $\Gamma$ that contains $G^{0}_0 = \gamma_0$. By Euler's degree sum formula (also known as handshaking lemma),
there will be an even number of vertices with odd degree in $\Gamma_0$. Hence, there exists a path $p$
in $\Gamma$ from $G^{0} _0$ to some $G^{n}_i$.

If two vertices of $\Gamma$ are connected by an edge, then clearly there exists a isotopy 
between the corresponding curves described in Figures \ref*{fig:F-M} and \ref*{fig:M2b}.
Hence, $p$ defines a homotopy of simple curves from $\gamma_0$ to a curve $\epsilon-$image equivalent to 
$\gamma_1$ through simple curves $\epsilon-$image equivalent to curves in the homotopy $\gamma$.

The only item left to prove is that, if $\gamma_1$ is simple or is a point, then the isotopy ends at $\gamma_1$.  If it is simple,
then we can perturb our homotopy so that, after it is perturbed, $\gamma_1$ is still the endpoint as per Lemma \ref*{lem:perturb}.  The above procedure does not affect
$\gamma_1$ (up to orientation) and so the isotopy ends at $\pm \gamma_1$, where $-\gamma_1$ is $\gamma_1$ oriented in the opposite direction.
If $\gamma_1$ is a point $p$, then we can perturb our homotopy so slightly so that the endpoint is a simple closed curve which can be contracted
through simple closed curves of length at most $\epsilon$ to $p$.  By the above argument, our isotopy then ends at this small curve, which can then be contracted to
$p$ through small simple closed curves, as desired.  This completes the proof of Theorem $1.1^\prime$.

\vspace{0.1in}

As a corollary we give a short proof of the Jordan-Schoenflies theorem. 

\begin{corollary} \label{thm:jordan_schoenflies}
A smooth simple closed curve in $\R ^2$
divides the plane into two regions diffeomorphic to the outside and inside of the unit
circle.

\end{corollary}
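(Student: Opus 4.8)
The plan is to derive the Jordan–Schoenflies theorem by applying Theorem $1.1'$ to a carefully chosen homotopy in $\mathbb{R}^2$, using the smooth curve as the starting curve $\gamma_0$ of an isotopy whose endpoint is a round circle. Since $\mathbb{R}^2$ is simply connected, the given smooth simple closed curve $\gamma_0$ is null-homotopic, but a contraction alone would only produce a shrinking point, not a diffeomorphism of the plane. Instead, I would build the argument in two halves: first show that $\gamma_0$ is \emph{isotopic} (through simple closed curves in $\mathbb{R}^2$) to the unit circle, and then show that an ambient isotopy carrying one to the other can be extracted, giving the required diffeomorphism.

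\medskip

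\noindent \textbf{Step 1: produce an isotopy to the unit circle.}
Since $\gamma_0$ is contained in some large ball, and $\mathbb{R}^2 \setminus \gamma_0$ is open, I can choose a small round circle $\gamma_1$ lying entirely in one of the complementary regions of $\gamma_0$; because $\mathbb{R}^2$ is contractible, $\gamma_0$ and $\gamma_1$ are freely homotopic, so fix such a homotopy $\gamma$. By Theorem $1.1'$, applied with $\gamma_0$ simple and $\gamma_1$ a small simple closed curve, there is an isotopy $\overline{\gamma}$ from $\gamma_0$ that ends at $\gamma_1$ up to orientation (the ``$\gamma_1$ is simple'' clause of the theorem). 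After a further trivial isotopy rescaling and translating $\gamma_1$ to the unit circle $S^1$, we obtain an isotopy of simple closed curves from $\gamma_0$ to $S^1$ in $\mathbb{R}^2$.

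\medskip

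\noindent \textbf{Step 2: upgrade to an ambient isotopy and conclude.}
An isotopy of embeddings $\overline{\gamma}: [0,1] \times S^1 \to \mathbb{R}^2$ is extended to an ambient isotopy $\Phi_t : \mathbb{R}^2 \to \mathbb{R}^2$ with $\Phi_0 = \mathrm{id}$ and $\Phi_1(\gamma_0) = S^1$; this is the standard isotopy extension theorem (Thom–Palais), valid here because the image of any fixed-time curve is compact and $\mathbb{R}^2$ has no boundary. The diffeomorphism $\Phi_1$ then carries the two complementary components of $\mathbb{R}^2 \setminus \gamma_0$ to the two components of $\mathbb{R}^2 \setminus S^1$, namely the open unit disk and the exterior of the closed unit disk. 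Hence $\gamma_0$ separates the plane into exactly two regions, diffeomorphic respectively to the inside and the outside of the unit circle.

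\medskip

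\noindent \textbf{Main obstacle.}
The delicate point is Step 1: we must verify the hypotheses of Theorem $1.1'$ are genuinely satisfied — in particular that the endpoint curve $\gamma_1$ can be taken simple — and, more importantly, check that the isotopy produced by Theorem $1.1'$ stays inside $\mathbb{R}^2$ with no constraint issues at infinity, which is automatic since all curves in the homotopy $\gamma$ (hence all curves $\epsilon$-image equivalent to them) lie in a fixed bounded region. A secondary subtlety is orientation: Theorem $1.1'$ only guarantees the isotopy ends at $\pm \gamma_1$, but since an orientation-reversing self-diffeomorphism of $S^1$ extends to a diffeomorphism of $\mathbb{R}^2$ preserving $S^1$, this ambiguity is harmless for the final statement. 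I expect Step 2 to be entirely routine given standard differential topology.
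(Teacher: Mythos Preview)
Your proposal is correct and follows essentially the same route as the paper: apply Theorem $1.1'$ to a homotopy between the given curve and a round circle, then invoke the Isotopy Extension Theorem to produce a diffeomorphism of $\mathbb{R}^2$ carrying one to the other. The only cosmetic difference is that the paper writes down an explicit homotopy --- the straight-line interpolation $\gamma(t,s)=s+t(\gamma_0(s)-s)$ from $S^1$ to $\gamma_0$ --- in place of your abstract ``$\mathbb{R}^2$ is contractible'' homotopy followed by a rescaling; your handling of the orientation ambiguity is also slightly more careful than the paper's.
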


\begin{proof}
Let $\gamma_0$ be a smooth, closed, simple curve in the plane.  
We define a homotopy through smooth curves from the unit circle $S^1$ to $\gamma_0$,
and then apply Theorem $1.1^\prime$ to it.
The homotopy is extremely simple.  We have that
	$$ \gamma_0:	S^1 \rightarrow \mathbb{R}^2, $$
and so we just linearly interpolate between the identity map on $S^1$ and this map.  That is,
for each $t \in [0, 1]$ and each $s \in S^1$, we define
	$$ \gamma(t,s) = s + t ( \gamma_0(s) - s). $$
By Theorem $1.1^\prime$, we can thus find an isotopy $\overline{\gamma}$ from $S^1$
to $\gamma_0$.  Since this isotopy is smooth, using
the Isotopy Extension Theorem (see Theorem 1.3 in \cite{H} on p.180),
we get a diffeomorphism from $\mathbb{R}^2 \setminus S^1$
to $\mathbb{R}^2 \setminus \gamma_0$.  
\end{proof}

\section{A quantitative version of the Baer-Epstein Theorem}

Recall that a theorem of Baer and Epstein states that if two closed curves on a 2-surface $M$
are homotopic, but not contractible, then they are isotopic.
If the two curves are contractible, then the theorem does not hold 
when $M$ is any orientable surface except for a sphere. 
Indeed, for these orientable surfaces, an isotopy preserves orientation, so two null-homotopic curves of opposite 
orientation are homotopic, but not isotopic.

In this section we use Theorem $1.1^\prime$ to prove 
Theorem \ref*{thm:baer_quantified}, an effective version
of the result of Baer and Epstein.

\begin{proof}
If $M$ is an orientable surface then the result follows immediately from
Theorem $1.1^\prime$. Indeed, by this theorem there exists
an isotopy between $\gamma_0$ and $\pm \gamma_1$ through curves of length less than or equal to
$L + \epsilon$.  However, $\gamma_1$ and $-\gamma_1$
are not homotopic, so the isotopy must be between $\gamma_0$ and $ \gamma_1$.

More generally, whenever $\gamma_1$ and $-\gamma_1$ are homotopic, 
the result still follows from Theorem $1.1^\prime$.
Suppose $\gamma_1$ and $-\gamma_1$ are homotopic as free loops, and are non-contractible. Fix a point
$p \in \gamma_1$ and consider the group $\pi_1(M,p)$ of homotopy classes of loops based at
$p$. There then exists a loop $\beta$ based at $p$ such that 
$[\beta \gamma_2 \beta^{-1}]=[-\gamma_2]$. 
By Lemma 2.3 in \cite{E} it follows that $M$ is $\mathbb{R}P^2$ or a Klein bottle.
We will show that in these two cases the final curve in the isotopy produced via the algorithm 
in Theorem $1.1^\prime$ has the same orientation as the final curve
in the homotopy.

Assume $M$ is $\mathbb{R}P^2$ or a Klein bottle and let $\alpha \in M$ be 
a non-contractible curve, such that $\alpha$ is homotopic through free loops to $-\alpha$.
We then have that $\alpha$ represents an element of order $2$ in $H_1(M, \mathbb{Z})$.
Let $p: \overline{M} \rightarrow M$ be a two-fold orientation covering
with $\overline{M}=S^2$ (if $M = \mathbb{R} P^2$) or $\overline{M}=T^2$ (if $M$ is the Klein bottle), and let 
$f: \overline{M} \rightarrow \overline{M}$ be the corresponding orientation reversing 
deck transformation. 

We then have that $p^{-1}(\alpha)$ is a collection of at most 2 simple closed curves and represents
a trivial element in $H_1(\overline{M}, \mathbb{Z})$. 
$p^{-1}(\alpha)$ has one connected component when $M = \mathbb{R}P^2$ and
two connected components when $M$ is a Klein bottle. 
In either case, $p^{-1}(\alpha)$ separates $M$
into two connected sub-manifolds with boundaries.  Let these sub-manifolds be $N_1$ and $N_2$;
we have that $\partial N_1 = \partial N_2 = p^{-1}(\alpha)$.

Let $\gamma$ be a generic homotopy of closed curves 
 on $M$ with $\gamma_0$ and $\gamma_1$ simple and let $\gamma'$ be the isotopy produced by the algorithm
in Theorem $1.1^\prime$. 
Without any loss of generality we may assume that for each $t$, $\gamma'_t$ is $\epsilon-$image equivalent
to $\gamma_t$. Indeed, we can modify the homotopy $\gamma$ by going back and forth in time over certain sections
of the homotopy so that this is true.

We lift $\gamma$ and $\gamma'$ to homotopies of curves on $\overline{M}$, obtaining 
$\overline{\gamma}$ and $\overline{\gamma'}$, respectively. For each time $t$
the collection of curves $\overline{\gamma}_t$ and $\overline{\gamma'}_t$ are fixed
by the deck transformation map $f$.
Fix an orientation of $\overline{M}$ and let $N$ be an embedded submanifold of $\overline{M}$
filling $\overline{\gamma}_0 = \overline{\gamma'}_0$. 
In particular, the orientation of $N$ is inherited from $\overline{M}$ and the orientation of $\partial N$
coincides with the orientation of $\overline{\gamma}_0$.
 
By the Isotopy Extension Theorem (see Theorem 1.3 in \cite{H} on p.180),
the isotopy of closed curves $\overline{\gamma'}$ can be extended to 
an isotopy $g': [0,1] \times N \rightarrow \overline{M}$ with 
$g'_0(N)= N$ and $g'_t(\partial N) = \overline{\gamma'}_t$.

We can also extend $\overline{\gamma}$ to a homotopy $g: [0,1] \times N \rightarrow \overline{M}$
with  $g_t(\partial N) = \overline{\gamma}_t$ as follows. 
Let $N_c$ be a manifold obtained from $N$ by gluing a collar $C= \partial N \times [0,1]$
to the boundary of $N$. We define a map $h_t: N_c \rightarrow \overline{M}$, such that
when restricted to $N \subset N_c$, $h_t$ is the inclusion map $\iota:N \rightarrow \overline{M}$
and for a point $(s,\tau) \in \partial N \times [0,1]$, we define 

	$$ h_t(s,\tau)=\overline{\gamma}_{t \tau}(s).$$
In other words, the map $h_t$ restricted to the collar $C$ coincides with 
$\overline{\gamma}$ restricted to $[0,t] \times S^1$
up to a rescaling of the domain. By composing this map with a diffeomorphism between $N$ and $N_c$,
we obtain the desired homotopy $g$.

We will also need a notion of the degree of a map at a point $y$.
Let $F:N \rightarrow \overline{M}$ be a smooth map and suppose $y \notin F(\partial N)$ 
is a regular point of $F$. Then the degree of $F$ at $y$ is given by
$$deg_F(y) = \sum _{x \in F^{-1}(y)} sign(dF_x)$$
where $sign(dF_x)=+1$ if $dF_x$ preserves orientation and is $-1$ otherwise.

Observe that $deg_F$ is constant on each connected component of
$M \setminus F(\partial N)$. Furthermore, suppose we choose a local chart on $\overline{M}$, containing a segment 
$l$ of $F(\partial N)$ with tangent vector pointing upwards (see Figure \ref*{fig:degree}) and the orientation
induced from that on $\overline{M}$ is counter-clockwise, then for two generic points $x$ and $y$ to the left and to the right
of $l$ respectively, we have $deg_F(x) = deg_F (y)+1$.

\begin{figure}[center] 
\includegraphics[scale=0.25]{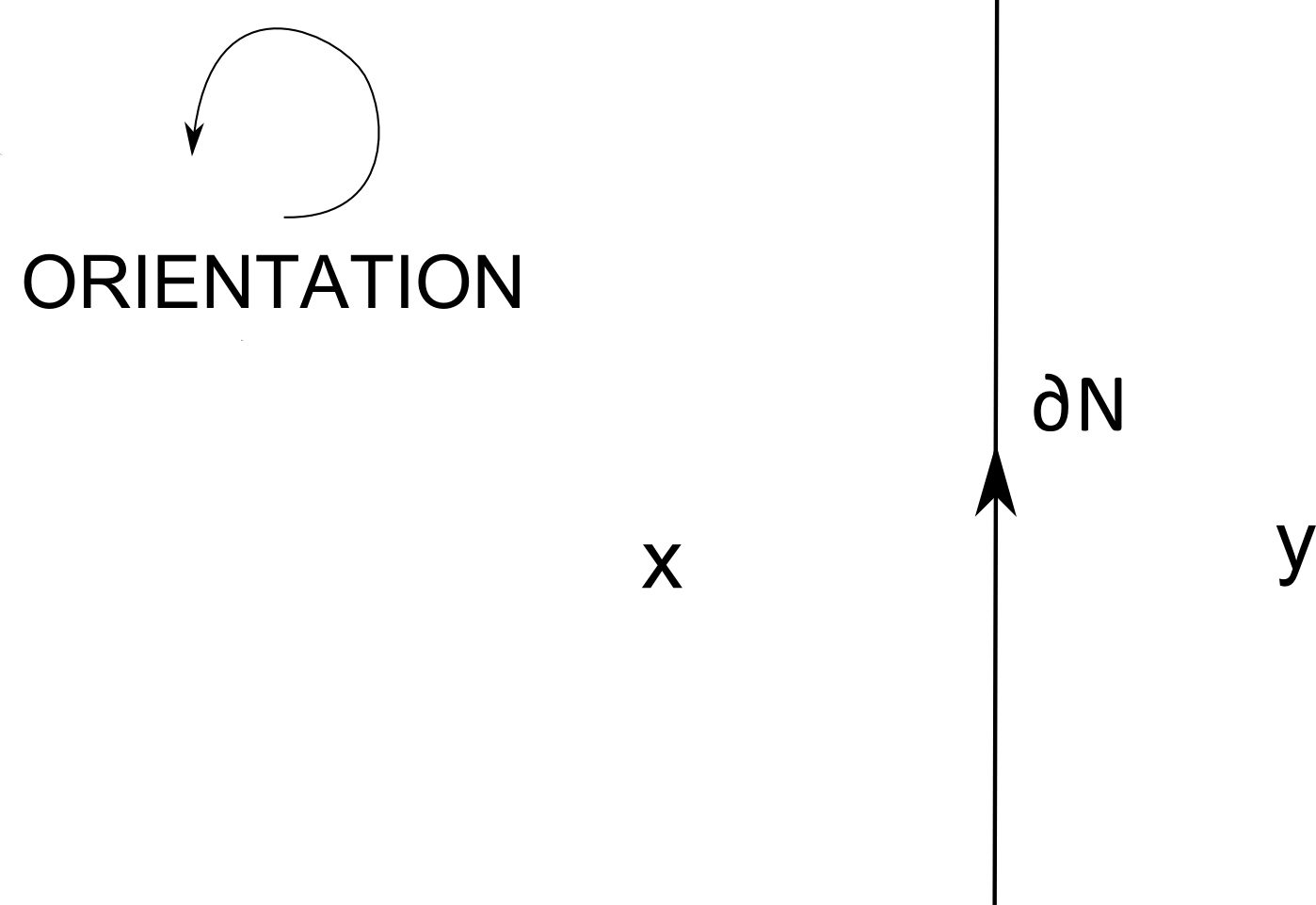} 
\caption{Crossing the boundary changes degree by 1: $deg_g(x) = deg_g (y)+1$} \label{fig:degree}
\end{figure}

We make two observations about the degrees $deg_{g_t}$ and $deg_{g'_t}$:

Observation 1. Define $deg_t=\sum  deg_{g_t}(x)$, where the sum is taken over all connected components
of $\overline{M} \setminus g_t(\partial N)$. We then have that $deg_t=1$ for all $t$.  This also holds for
$deg'_t = \sum deg_{g'_t} (x)$.

\textit{Proof} Since $deg_0 = deg'_0 = 1$, it is enough to consider how the degree changes under Reidemeister moves
(see Figure \ref*{fig:reidemeister_moves}).
The image of the boundary $\partial N$ is fixed by the action of orientation reversing
deck transformation $f$. So for each Reidemeister move $R$ happening
in the neighbourhood of a point $p \in \overline{M}$ there is a simultaneous
Reidemeister move happening in the neighbourhood of $f(p)$.
Considering each of the 3 types of Reidemeister moves we obtain that
changes to the sum of degrees $deg_t$ corresponding to each of the two simultaneous moves 
cancel out.

Observation 2. For each $x \in \overline{M}$, $deg_{g_t}(x)+deg_{g'_t}(x)=0 \; (mod \; 2)$.

\textit{Proof} Again this follows by induction on the number of Reidemeister moves.
We consider each of the R3 moves
and the corresponding modified moves from the proof of Theorem $1.1^\prime$
(see Figures \ref*{F-R} and \ref*{fig:M2b}).

Since, by assumption, $\gamma_1$ is a simple closed curve, we have that
$g_1(\partial N) = \pm g'_1(\partial N)$.
Since $\gamma_1$ is simple, it divides $\overline{M}$ into two connected regions, $M_1$ and $M_2$. 
The degree of $g_1$ is constant on each region, as is the degree of $g'_1$.  By Observation 1 and the fact that
the degree changes by 1 across $\partial M_1 = \partial M_2$, we have that the degree of $g_1$ is $0$ on one of
the regions, and is $1$ on the other.  By Observation 1 applied to $deg'_1$ and Observation 2, we see that
the degree of $g'_1$ on each region is the same as the degree of $g_1$. Since the degree of the map
on these two components determines the orientation 
of the boundary (see Figure \ref*{fig:degree}), we must have that
$g_t(\partial N) = g'_t(\partial N)$.
This finishes the proof.

\end{proof}

\textbf{Remark.} Any two simple curves on a 2-sphere are isotopic, however, 
Theorem \ref*{thm:baer_quantified} does not hold for simple curves on $S^2$.
Indeed, consider $S^2$ with the standard metric and orientation, and
consider a positively oriented circle $\gamma$ of length $\frac{1}{100}$. 
There exists a homotopy from $\gamma$ to $-\gamma$ through curves of length
$\leq \frac{1}{100}$, but any isotopy between them must go through a curve of length
$\geq 2 \pi$. We conjecture that, if a simple curve $\gamma$ is homotopic to a curve of opposite orientation through 
a homotopy of curves of length $<L$, then for any $\epsilon > 0$ there exists a contraction of 
$\gamma$ to a point through curves of length at most $L + \epsilon$.
Finally, we conjecture that Theorem \ref*{thm:baer_quantified} is 
true for contractible simple closed curves on a non-orientable surface.

\section{Homotopy of a double loop}

In this section we prove Theorem \ref*{thm:no_torsion_quantified}, the quantitative version of the
fact that orientable surfaces have no elements of order $2$ in their fundamental groups.

Let $M$ be an orientable surface and $\alpha$ a closed curve on $M$. 
Suppose $\gamma$ is a smooth homotopy with $\gamma_0 = 2 \alpha$.  Let $\alpha$ be parametrized by
$[0,1]$, with $\alpha(0) = \alpha(1)$.

Since $M$ is orientable, we can define a continuous unit normal $n(s)$ to $\alpha(s)$.
Let $\alpha_{\epsilon}(s)$ denote the value of the exponential map at $\alpha(s)$
evaluated at $ \epsilon n(s)$ for sufficiently small values of $\epsilon$.
Let $f$ be a smooth bump function supported (and non-zero) on
the open interval $(\frac{1}{2},1)$, and attaining maximum at $f(\frac{3}{4}) = 1$.
We define a homotopy of $2 \alpha$ to a curve $\beta$:

$$
\sigma_t(s) = \left \{
 \begin{array}{rl}
 \alpha(2s) &\mbox{ if $0 \leq s \leq \frac{1}{2}$} \\
 \alpha_{ t f(s) \epsilon}(2s-1)  &\mbox{ if $\frac{1}{2} \leq s \leq 1$}
       \end{array}
       \right.
$$

We can make a small perturbation to $\alpha$ that increases its length by an arbitrarily small amount, and
which removes all non-normal self-intersections (see the proof of Proposition \ref*{prop:generic}).  The original and perturbed versions of
$\alpha$ are clearly homotopic through curves that are larger than $L$ by an arbitrarily small amount.  As such,
choosing $\epsilon$ to be small enough, $\beta$ also has only normal self-intersections, and its length
is only larger than that of $\alpha$ by an arbitrarily small amount.
If $\alpha$ has $k$ self-intersections, then $\beta$ has $4k+1$ intersections, as in Figure \ref*{fig:double}.
Since $\beta$ is a small perturbation of $2 \alpha$, there exists a small perturbation of 
$\gamma$ that starts at $\beta$ and satisfies the conclusions of Lemma \ref*{lem:perturb}. 
For simplicity we will assume that $\gamma$ has already been put into this form.

\begin{figure}[center]
\includegraphics[scale=0.8]{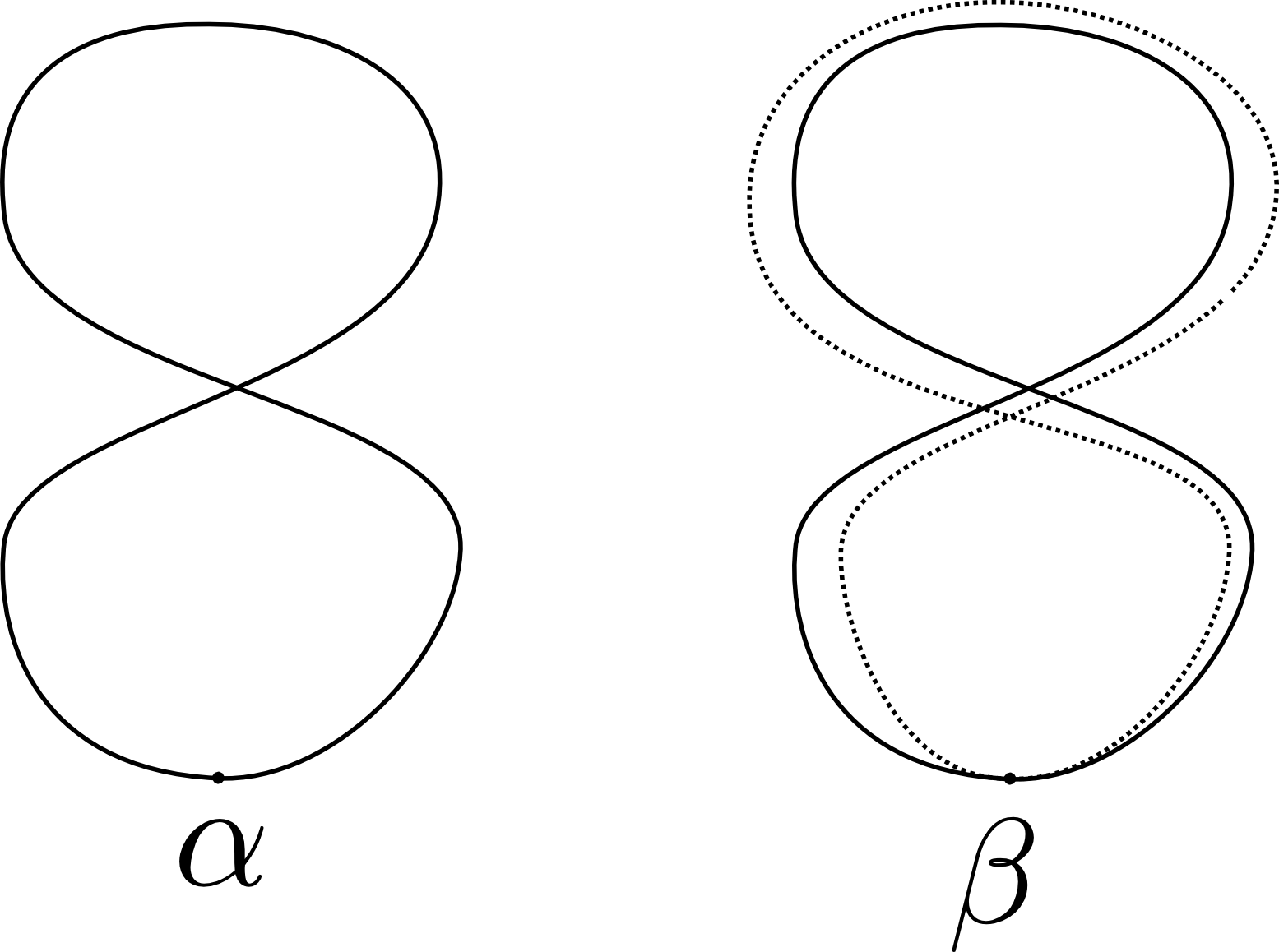} 
\caption{The curves $\beta$ and $\alpha$, if $\alpha$ has one intersection} \label{fig:double}
\end{figure}

To proof the theorem we need to construct a homotopy of $\beta([0, \frac{1}{2}])$ to
a point that goes through subcurves of $\gamma$.

The method that we use will be similar to that employed to prove Theorem $1.1^\prime$:
we will construct a graph $\Gamma$ which contains a path that will be used to define the desired homotopy. 
As in the proof of this theorem, let
	$$ 0 = t_0 < \dots < t_n = 1 $$
be times at which $\gamma_{t_i}$ contains only normal self-intersections, and between which exactly one Reidemeister move
occurs.  Let $\{p_i ^j\}$  be the set of intersection points of $\gamma_{t_j}$,
$1 \leq i \leq k_j$.

As before, we will begin by defining how vertices are added to $\Gamma$.  For each $p_i^j$,
add a vertex to $\Gamma$, and label it ``$p_i^j$".  Additionally, for each $j$ with $0 \leq j \leq n$,
add a vertex to $\Gamma$ with label ``$p_0 ^j$" (these will correspond to an ``empty self-intersection" at each
point $t_j$).

We will now define the edges of $\Gamma$ as follows. Let $R_j$ denote the Reidemeister move between $t_j$ and $t_{j+1}$.
Again, we will add certain ``vertical" edges between vertices with labels $p_i^j$ and $p_k^{j+1}$, followed
by certain ``horizontal" edges between vertices with labels $p_i^j$ and $p_k^j$.

\noindent \textbf{Vertical edges.}
Suppose first that the Reidemeister move $R_j$ does not involve
the intersection point $p^j_i$, $j < n$.  We then observe that the intersection point changes smoothly
through the Reidemeister move and can be uniquely matched with an intersection point 
$p^{j+1}_k$.  We connect the corresponding vertices of $\Gamma$ by an edge.
If $R_j = M1$ and $p^{j+1}_i$ was created in this move, then we connect vertex $p^j_0$
and $p^{j+1}_i$ by an edge. Similarly, if $R_j = -M1$ and $p^j_i$ is destroyed in this move, then
we connect $p^j_i$ to $p^{j+1}_0$. If $R_j = M3$, then we can add $3$ edges to the graph.
There are $3$ vertices $p^j_{i_1}$, $p^j_{i_2}$, and $p^j_{i_3}$ that are involved in $R_j$.
We can smoothly follow each of these intersections through the $R_j$ move.  Doing this,
we see that $p^j_{i_1}$ arrives at intersection point $p^{j+1}_{i_1'}$, 
$p^j_{i_2}$ arrives at point $p^{j+1}_{i_2'}$, and $p^j_{i_3}$ arrives at $p^{j+1}_{i_3'}$.  We add edges
connecting each of these pairs.  This is shown in Figure \ref*{fig:R3labels}.

\begin{figure}[center]
\includegraphics[scale=0.75]{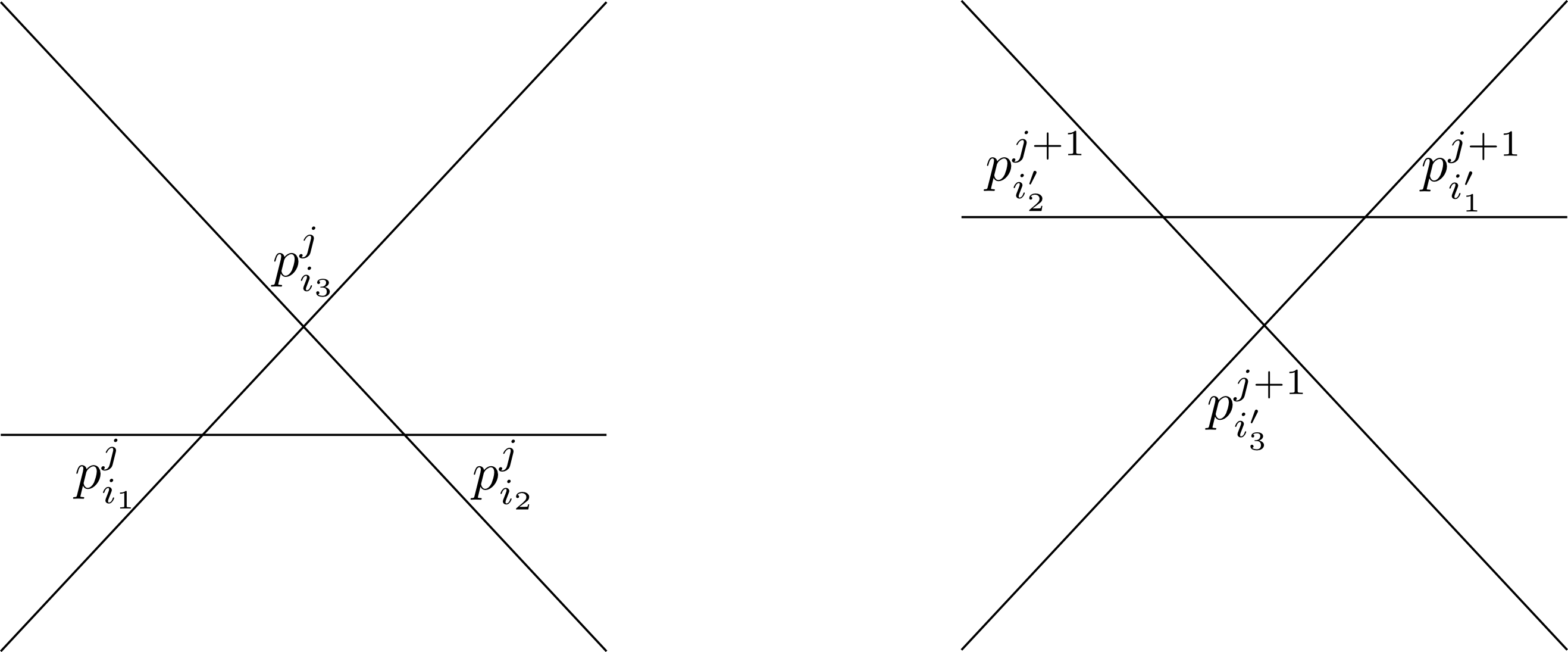} 
\caption{Labelling of vertices before and after an $R3$ move} \label{fig:R3labels}
\end{figure}

\noindent \textbf{Horizontal edges.}
If $R_j = M2$ and $p^{j+1}_{i_1}$ and $p^{j+1} _{i_2}$ are created in this move,
or if $R_{j+1} = -M2$ and $p^{j+1}_{i_1}$ and $p^{j+1}_{i_2}$ are destroyed in this move,
then we add an edge between these two vertices.
If both of these things happen, then we connect them by two edges.

\textbf{Edges of $\beta$.} We now define some additional edges between vertices $p_i^0$, $i \geq 1$,
which correspond to self-intersection points of $\beta$.
We will do this with the aid of Figure \ref*{fig:extra_edge}.  This figure shows $2$ arcs from
$\beta([0, \frac{1}{2}])$ in bold, and the corresponding $2$ arcs from $\beta([\frac{1}{2},1])$
as doted lines.  Let $p_{i_1}^0$, $p_{i_2}^0$, $p_{i_3}^0$, and $p_{i_4}^0$ be as in the figure.
$p_{i_1}^0$ and $p_{i_2}^0$ correspond to a double intersection of $2 \alpha$, moved apart when 
$2 \alpha$ was perturbed into $\beta$, and $p_{i_3}^0$ and $p_{i_4}^0$ are created during this perturbation.
For each such cluster of $4$ self-intersection points, we add an edge between $p_{i_1}^0$ and $p_{i_2}^0$,
and we add an edge between $p_{i_3}^0$ and $p_{i_4}^0$.

\begin{figure}[center]
\includegraphics[scale=1.5]{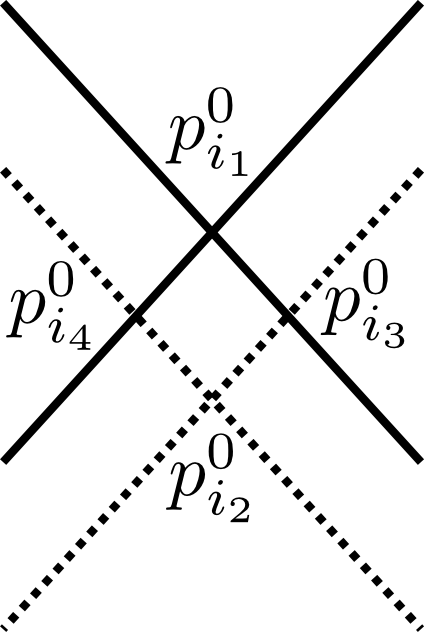} 
\caption{Adding extra edges to the graph $\Gamma$} \label{fig:extra_edge}
\end{figure}

Let $F$ denote the following subset of vertices of $\Gamma$
	$$ F = \bigcup_j p^j_0 \cup \bigcup_i p^n_i \cup p^0_1. $$
Observe that by construction, any vertex that is not in $F$
has exactly $2$ edges. Hence, there exists a path $P$ in $\Gamma$
from $p^0_1$ to a vertex in $F \setminus \{p^0_1 \}$.

We use $P$ to construct a homotopy of $\beta([0, \frac{1}{2}])$ to
a subcurve of $\gamma_1$ through subcurves of $\gamma$.
It may happen that in the process of this homotopy $\beta([0, \frac{1}{2}])$
is contracted to a point.

Note that $\gamma_{t_j}^{-1}(p^j _i)$ subdivides $S^1$ into two intervals $I_1$ and $I_2$.
Let $(C^j_i)_1$ and $(C^j_i)_2$ denote the two closed curves $\gamma_{t_j}|_{I_1}$ and $\gamma_{t_j}|_{I_2}$.
In the case of $p^j_0$, $(C^j_0)_{ \{ 1, 2 \} }$ are a point and the entire curve.
The next lemma will show us that paths in $\Gamma$ correspond to homotopies through subcurves.

\begin{lemma}
Suppose $p_i^j$ and $p_l^m$ are connected by an edge in $\Gamma$.
We can then find $a, b \in \{ 1, 2 \}$ with $a \neq b$, and $c, d \in \{ 1, 2 \}$ with
$c \neq d$, such that $(C^j_i)_a$ and $(C^m_l)_c$ are homotopic through subcurves of $\gamma$,
and $(C^j_i)_b$ and $(C^m_l)_d$ are also homotopic through subcurves of $\gamma$.
\end{lemma}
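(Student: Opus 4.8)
The plan is to go through the edges of $\Gamma$ according to how they were introduced and, in each case, write down the two homotopies. The common device is the following. If $q$ is a transverse double point of $\gamma_t$, then $\gamma_t^{-1}(q)\subset S^1$ consists of two points which vary continuously in $t$ as long as the double point persists, and which cut $S^1$ into two arcs whose $\gamma_t$-images are precisely the two closed curves $(C)_1$ and $(C)_2$ associated to $q$. Hence ``following a double point'' through a one-parameter family automatically yields a homotopy of each of its two associated curves through subcurves of $\gamma$. This already handles every vertical edge coming from a Reidemeister move that neither creates nor destroys the double point in question: the double point, its preimage pair, and its two associated curves all vary continuously for $t\in[t_j,t_{j+1}]$, including across the singular instant (for an $R3$ move the three colliding double points separate continuously). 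In particular this covers the three edges of a single $R3$ move (Figure \ref*{fig:R3labels}) and the untouched double points of $R1$ and $R2$ moves; in each case take $a=c$, $b=d$ under the obvious labelling.

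Next consider a vertical edge produced by an $R1$ move, joining $p_0^j$ to the double point $p_i^{j+1}$ the move creates. One of the two curves associated to $p_i^{j+1}$ is the small loop born in the move; running the move backwards in time shrinks this loop to a point, so it is homotopic through subcurves to a point --- matching the point component of $p_0^j$ (and if a specific point is wanted, slide along $\gamma_{t_j}$). The other associated curve is the complementary arc, which in the same backwards limit fills out all of $\gamma_{t_j}$, matching the whole-curve component of $p_0^j$. The $-R1$ case is identical with time reversed.

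The essential case, and the main obstacle, is a horizontal edge produced by an $R2$ move, joining the two double points $p$ and $q$ it creates (or destroys). Let $t^*$ be the time of the move; at $t^*$ the two participating branches are tangent at a point $T$ whose preimage is a pair $T_1,T_2\in S^1$, one point on each branch. As $t\to t^*$ the preimage pair of $p$ converges to $\{T_1,T_2\}$, and so does the preimage pair of $q$; hence each of the two curves associated to $p$ and each of the two associated to $q$ converges to one of the two closed curves $\gamma_{t^*}|_{\mathrm{arc}}$ cut out by $T_1$ and $T_2$. Reading off the cyclic order of the four preimage points $p_1,p_2,q_1,q_2$ on $S^1$ --- there are only two possibilities, according as the two branches run parallel or antiparallel --- one checks in both that the four limits coincide in pairs: one curve of $p$ has the same limit as one curve of $q$, and the other with the other. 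Therefore, by running time from $t_{j+1}$ down to $t^*$ and back up, we obtain, through subcurves of $\gamma$, a homotopy from each curve of $p$ to the matching curve of $q$. This is exactly the ``go back and forth in time'' mechanism of Section~2, needed here for the same reason the naive approach failed there. (When the edge instead comes from a $-R2$ move at the following time, the argument is the same with $t_j$ and $t_{j+1}$ exchanged.)

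Finally, the extra edges of $\beta$. Recall that $\beta$ is obtained from $2\alpha$ by pushing the second copy of $\alpha$ off along a normal by at most $\epsilon$, leaving the first copy unchanged, and that the four double points of a cluster in Figure \ref*{fig:extra_edge} resolve a quadruple point of $2\alpha$ coming from a self-intersection of $\alpha$: two of them ($p_{i_1}^0,p_{i_2}^0$) are the self-crossings of the individual copies and two ($p_{i_3}^0,p_{i_4}^0$) are crossings between the copies. For each of the two edges added in this cluster, a direct check of the parametrization of $\beta$ shows that the two curves associated to one endpoint are obtained from those associated to the other by interchanging the roles of the pushed and the un-pushed copy over matching parameter intervals; the corresponding curves are therefore $O(\epsilon)$-perturbations of one another and are joined by a homotopy of size $O(\epsilon)$ through curves that are all $O(\epsilon)$-perturbations of a single subcurve of $\gamma_0=\beta$. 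By Definition \ref*{defn:subcurves} this is a homotopy through subcurves of $\gamma$, and pairing the curves this way provides the required $a,b,c,d$. This exhausts the edge types and proves the lemma.
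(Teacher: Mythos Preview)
Your proof is correct and follows the same overall strategy as the paper: treat vertical edges by continuously tracking the double point, treat horizontal $\pm R2$ edges by running time down to the tangential instant and back, and give a separate argument for the extra edges of $\beta$. Your handling of the first two cases is essentially identical to the paper's (the paper just says the vertical case is ``evident'' and describes the horizontal case as a smooth path from $x$ through the tangency to $y$).

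The one place where your argument differs slightly is the extra edges of $\beta$. The paper produces the required homotopy explicitly: it defines a second perturbation $\overline{\sigma}_t$ in which the \emph{first} copy of $\alpha$ is pushed off rather than the second, and concatenates $\sigma^{-1}$ with $\overline{\sigma}$ to obtain a homotopy from $\beta$ to itself (reparametrized by a half-shift). Following the cluster of four double points through this explicit family, one sees $p_{i_1}^0 \leftrightarrow p_{i_2}^0$ and $p_{i_3}^0 \leftrightarrow p_{i_4}^0$ swap, and the associated subcurves are carried along through honest subcurves of small perturbations of $2\alpha$. Your argument instead observes directly that the two curves at one endpoint are obtained from those at the other by exchanging the pushed and unpushed copies on matching parameter intervals, hence are $O(\epsilon)$-close, and invokes the ``up to small perturbation'' clause of Definition~\ref*{defn:subcurves} for the interpolation. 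Both routes are valid; the paper's is more constructive, yours leans a bit harder on the flexibility built into the definition, but the underlying observation (the role-swap of the two copies) is the same.
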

\begin{proof}
The lemma is evident for vertical edges. For horizontal edges, if $x$ and $y$ are the two intersection points
that are involved in the move $R = M2$ or $-M2$, we see that there is a smooth path from $x$ to the point of tangential
self-intersection at the point in time where $R$ occurs, and there is also a smooth path from $y$ to this point of tangential
self-intersection, and so we can find a smooth path from $x$ to $y$.  This path induces the desired homotopy.

We are left with the extra edges that were added in at time $t_0$. Looking again at Figure \ref*{fig:extra_edge},
we have $2$ pairs of self-intersection points.  We define a new perturbation of $2 \alpha$, called $\overline{\sigma}_t$:

$$
\overline{\sigma}_t(s) = \left \{
 \begin{array}{rl}
 \alpha_{ f(s+\frac{1}{2}) t \epsilon}(2s) &\mbox{ if $0 \leq s \leq \frac{1}{2}$} \\
 \alpha(2s -1 )  &\mbox{ if $\frac{1}{2} \leq s \leq 1$}
       \end{array}
       \right.
$$
Recall that $\alpha_{ f(s+\frac{1}{2}) t \epsilon}$ means a perturbation by ${ f(s+\frac{1}{2}) t \epsilon} n(s)$, where $n(s)$ is a smooth outward normal field and $f$ is a smooth bump function supported on $(\frac{1}{2},1)$.
We can now concatenate $\sigma_t$ with $\overline{\sigma_t}$.  The result is a homotopy from $\beta$ to itself, but with a different
parametrization.
Furthermore, for each cluster of self-intersection points as shown in Figure \ref*{fig:extra_edge}, we can follow each of them
over the course of the concatenated homotopy, and we can also follow the subcurves defined by each of them.  The result is exactly what we seek:
for each pair of self-intersections joined by an edge, they switch places over the course of this homotopy.  As such,
we obtain the desired homotopies over subcurves.
\end{proof}

By examining the degrees of the vertices we see that there exists a path from $p^0 _1$ to some other vertex in $F$.
Hence, there exists a homotopy of $\beta([0, \frac{1}{2}])$ that either contracts it to a point,
or homotopes it to $\gamma_t$ for some time $t$ over curves which are longer than $L$ by an arbitrarily small amount.
Since $\gamma_t$ is also contractible over curves with this same length bound,
this completes the proof of Theorem $1.2^\prime$.  From the remarks in the introduction, we have that Theorem \ref*{thm:no_torsion_quantified} is true, too.

\vspace{0.1in}
\textbf{Remark.} It is natural to ask whether the same result holds for $m$ iterates of a curve.
The above proof does not directly generalize to this situation. Consider, for example, a simple closed curve 
$\alpha$, such that $\beta = 3 \alpha$ can be contracted to a point through curves of length 
$\leq L$. A small perturbation of $\beta$ (analogous to $\sigma_t$
in the proof above) has two self-intersection points. Each of the self-intersection points
divides $\beta$ into two subcurves, one of which is a perturbation of $\alpha$
and the other is a perturbation of $2 \alpha$. Denote these subcurves by $A_1$ and $A_2$
for the first self-intersection point, and $B_1$ and $B_2$ for the second.

We can consider homotopies of these 4 subcurves
and apply the same method of tracking the self-intersection
point as in the proof above. For certain homotopies of $\beta$, however, this will
produce a homotopy between pairs $A_1$ and $B_1$, $A_2$ and $B_2$, 
giving us no information on how $\alpha$ can be contracted.

The authors have partial results on this problem that will appear in an upcoming article.

\section{Higher dimensional counter-examples}

In this section we give some higher dimensional counterexamples to Theorems \ref*{thm:no_torsion_quantified} and $1.2^\prime$.

We remark that Theorems \ref*{thm:baer_quantified} and $1.1^\prime$  
do not hold for curves on a manifold of dimension 3 because of the existence
of non-trivial knots. In higher dimensions they trivially hold since curves in a generic homotopy
do not have self-intersections.

We begin with a definition:
\begin{definition}
Suppose $\alpha$ is a curve of length $\leq L$, and let
	$$ [\alpha]_L = \inf \{ \textrm{length}(\beta) : \text{ $\beta$ is homotopic to $\alpha$
through curves of length at most $L$}\}. $$
\end{definition}

Theorem \ref*{thm:no_torsion_quantified} implies the following:
\begin{proposition}
If $M$ is an orientable 2-manifold then, for every $\epsilon > 0$,
$[\alpha] _{L+\epsilon} \leq [2 \alpha] _L.$
\end{proposition}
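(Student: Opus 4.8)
The plan is to deduce the Proposition from the \emph{construction} in the proof of Theorem $1.2'$, not merely from its statement, using the observation that nothing in that proof uses that the homotopy of $2\alpha$ ends at a point until its final sentence. If $\mathrm{length}(2\alpha) > L$ then $[2\alpha]_L = +\infty$ and there is nothing to prove, so assume $\mathrm{length}(2\alpha) \le L$; in particular $\mathrm{length}(\alpha) \le L/2 < L + \epsilon$, so $[\alpha]_{L+\epsilon}$ is defined. Fix $\epsilon > 0$ and any $\delta > [2\alpha]_L$, and choose a curve $\beta$ with $\mathrm{length}(\beta) < \delta$ that is homotopic to $2\alpha$ through curves of length $\le L$; call this homotopy $\gamma$, so that $\gamma_0 = 2\alpha$ and $\gamma_1 = \beta$.

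I would then run the argument of Section 4 verbatim on $\gamma$: perturb $\alpha$, form the perturbation $\sigma_1$ of $2\alpha$ whose first half $\sigma_1([0,\tfrac12])$ is a perturbed copy of $\alpha$, perturb $\gamma$ so that it starts at $\sigma_1$, ends at a small perturbation of $\beta$, and satisfies Lemma \ref*{lem:perturb}, build the graph $\Gamma$ with its distinguished vertex $p^0_1$ and the set $F = \bigcup_j p^j_0 \cup \bigcup_i p^n_i \cup p^0_1$, and extract a path $P$ in $\Gamma$ from $p^0_1$ to another vertex of $F$. The lemma in Section 4 then turns $P$ into a homotopy of $\sigma_1([0,\tfrac12])$ --- a perturbed copy of $\alpha$ --- through subcurves of the (perturbed) $\gamma$, which either contracts it to a point (if $P$ ends at some $p^j_0$) or homotopes it to a subcurve of $\gamma_1$ (if $P$ ends at some $p^n_i$). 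This is the one place the argument departs from the proof of Theorem $1.2'$: there $\gamma_1$ is a point, so both alternatives are contractions; here we keep the second alternative and merely record that a subcurve of $\gamma_1$ has length at most $\mathrm{length}(\gamma_1)$.

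The remaining step is length bookkeeping. Every curve of the homotopy produced above is, up to an arbitrarily small perturbation, a subcurve of some $\gamma_{t'}$, hence has length $\le L + \tfrac{\epsilon}{2}$ once the perturbations in Lemma \ref*{lem:perturb} and in ``goes through subcurves'' are taken small enough; and its final curve is either a point or a subcurve of the perturbed $\gamma_1$, hence (keeping $\gamma_1$ itself unperturbed via Lemma \ref*{lem:perturb} when $\beta$ is already normal, and otherwise changing its length by less than $\tfrac{\epsilon}{4}$) has length $< \delta$. Finally, $\alpha$ is joined to $\sigma_1([0,\tfrac12])$ by the evident short homotopy through curves of length $\le \mathrm{length}(\alpha) + \tfrac{\epsilon}{2} \le L/2 + \tfrac{\epsilon}{2} < L + \epsilon$. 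Concatenating, $\alpha$ is homotopic through curves of length $\le L + \epsilon$ to a curve of length $< \delta$, so $[\alpha]_{L+\epsilon} < \delta$; letting $\delta \downarrow [2\alpha]_L$ gives the claim. I expect the main obstacle to be entirely bookkeeping rather than a new idea: one must verify that $\Gamma$, the handshaking-lemma degree count, and the lemma matching paths with homotopies-through-subcurves never invoke $\gamma_1 = \mathrm{pt}$, and that the chain of small perturbations (of $\alpha$, of $2\alpha$ into $\sigma_1$, of $\gamma$ into its normalization, and those implicit in ``$\epsilon$-image equivalent'' and ``goes through subcurves'') can be chosen so that no curve along the final homotopy exceeds $L+\epsilon$ and the final curve does not exceed $\mathrm{length}(\beta)$ --- both routine given the machinery of Sections 2 and 4.
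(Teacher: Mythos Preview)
Your approach is exactly the paper's intended one: the paper simply asserts that the Proposition follows from Theorem~1.2, and you have correctly unpacked this by rerunning the construction of Section~4 on a homotopy $\gamma$ of $2\alpha$ ending at a short curve $\beta$ rather than at a point. The length bookkeeping and the handling of the perturbations are fine.

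There is, however, one small slip in your case analysis of where the path $P$ terminates. You write that if $P$ ends at some $p^j_0$ then the perturbed $\alpha$ has been contracted to a point, and that the only other possibility is landing at some $p^n_i$ and hence on a subcurve of $\gamma_1$. But the two subcurves attached to $p^j_0$ are a point \emph{and} the entire curve $\gamma_{t_j}$; which of the two you have been tracking depends on the path. The paper's own summary of this step reads ``either contracts it to a point, or homotopes it to $\gamma_t$ for some time $t$'', and it is this second alternative --- landing on all of $\gamma_{t_j}$ for some intermediate $j$ --- that you have misidentified. In the paper's setting this alternative is harmless because $\gamma$ ends at a point, so one continues along the remaining portion of $\gamma$. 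In your setting it is equally harmless, but you must say so: if the path delivers you to the whole curve $\gamma_{t_j}$ for some $j<n$, append the homotopy $\gamma|_{[t_j,1]}$ (through curves of length $\le L$) to reach $\gamma_1$, whose length is $<\delta$. With this one extra line your argument is complete.
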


This is not true in higher dimensions.

\begin{proposition}
\label{prop:n}
For every $K > L > 0$ there exists a Riemannian metric on $S^n$, $n \geq 4$,
and a curve $\alpha$ on $S^n$ such that $[2 \alpha]_L = 0$ and $[\alpha]_K > 0$.
\end{proposition}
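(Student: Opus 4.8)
The plan is to build the example by hand, exploiting the extra room in dimension $n \geq 4$. I would start with a curve $\alpha$ that represents an order-$2$ element in $\pi_1$ of some auxiliary space, so that $2\alpha$ is contractible but $\alpha$ is not, and then embed everything into $S^n$ with a cleverly chosen metric. The natural model is $\mathbb{R}P^m$ for suitable $m$: let $\alpha$ be (a lift of) the generator of $\pi_1(\mathbb{R}P^m) = \mathbb{Z}/2$, so $2\alpha$ bounds a disk. The first step is to choose $\alpha$ to be a short closed geodesic in $\mathbb{R}P^m$ of length, say, $L/2$, and observe that $2\alpha$ can be contracted through curves of length at most $L$ (indeed one can arrange the contraction of the doubled projective line to pass only through short curves, since it lifts to a contraction of a great circle in $S^m$ swept out by shrinking latitude circles — here one uses that the relevant minimax quantity over the disk bounded by $2\alpha$ is small, so $[2\alpha]_L = 0$ after rescaling the metric to make it actually reach $0$, or more precisely one takes a sequence of contractions whose maximal length tends to $0$). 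Embed $\mathbb{R}P^m$ smoothly in $S^n$; this is possible with $n \geq 4$ and $m$ chosen appropriately (e.g. $m=2$, $\mathbb{R}P^2 \hookrightarrow \mathbb{R}^4 \subset S^4$ via the Veronese-type embedding), and extend the metric arbitrarily to a smooth metric on $S^n$.

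The second, and main, step is to arrange that $[\alpha]_K > 0$, i.e. that $\alpha$ cannot be contracted — or even shortened below a fixed positive length — through curves of bounded length $K$. Since $\alpha$ is null-homotopic in $S^n$, one cannot use a purely homotopical obstruction in the ambient sphere; instead I would make the metric on $S^n$ so that every homotopy from $\alpha$ to a very short curve is forced to leave a neighbourhood of $\mathbb{R}P^m$ and traverse a "long neck." Concretely: take a small tubular neighbourhood $U$ of $\mathbb{R}P^m$ in $S^n$, and outside $U$ insert a metric with a long thin tube (a "finger" of length $\gg K$) separating $U$ from the rest of $S^n$, so that any curve of length $\leq K$ that is homotopic to $\alpha$ within the length-$K$ constraint must stay inside $U$. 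Inside $U$, which deformation-retracts to $\mathbb{R}P^m$, the class of $\alpha$ in $\pi_1(U) = \mathbb{Z}/2$ is nontrivial, so $\alpha$ cannot be contracted staying inside $U$, and a systolic-type estimate (the metric on $U$ restricted from a fixed reference metric has a positive $\mathbb{Z}/2$-homotopy systole) shows $[\alpha]_K \geq s_0 > 0$ for some constant $s_0$ depending only on the local metric near $\mathbb{R}P^m$. Choosing $K$ as in the statement just amounts to making the neck long enough, depending on $K$.

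The key technical lemma to isolate and prove carefully is the "confinement" claim: if $\gamma$ is a homotopy starting at $\alpha$ through curves of length $\leq K$, then every $\gamma_t$ stays inside $U$. This follows from a standard argument: the complement of $U$ is reached from $U$ only through the long neck, so a continuously varying family of curves of length $\leq K < \operatorname{length of neck}$ that starts inside $U$ can never have any point cross the neck (any curve touching both ends of the neck would already be too long), hence the whole family is confined; then one applies the positive systole bound inside $U$. The main obstacle, and the place to be careful, is making the geometry of the neck and the embedding $\mathbb{R}P^m \hookrightarrow S^n$ precise enough that (i) $2\alpha$ still contracts through arbitrarily short curves inside $U$ — which requires that the contraction of the disk bounded by $2\alpha$ genuinely stays in $U$ and stays short, and (ii) the neck really does block all length-$K$ competitors; balancing these forces some care in choosing the scale of $U$ versus the length of $\alpha$ versus $K$. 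Everything else — the existence of smooth embeddings of $\mathbb{R}P^2$ in $S^4$, the retraction of a tubular neighbourhood, the positivity of the $\mathbb{Z}/2$-systole of a fixed metric — is standard.
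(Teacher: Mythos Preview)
Your overall strategy---embed $\mathbb{R}P^2$ into $S^n$, take $\alpha$ to be a short generator of $\pi_1(\mathbb{R}P^2)=\mathbb{Z}/2$, contract $2\alpha$ inside $\mathbb{R}P^2$, and modify the ambient metric so that escaping a tubular neighbourhood $U$ is expensive---is exactly the paper's approach (the paper works in $\mathbb{R}^n$ and then notes the construction is local, so it transfers to $S^n$).

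However, your confinement lemma has a genuine gap. You argue that a family of curves of length $\le K$ starting inside $U$ cannot cross a long neck because ``any curve touching both ends of the neck would already be too long.'' But a single curve need not touch both ends simultaneously: the entire loop can drift along the neck. Concretely, if the neck is a collar $\partial U \times [0,N]$ with the product metric, then since the normal sphere bundle $\partial U\to\mathbb{R}P^2$ has connected fibre $S^{n-3}$, the map $\pi_1(\partial U)\to\pi_1(U)\cong\mathbb{Z}/2$ is onto; thus $\alpha$ is homotopic inside $U$ to a loop $\alpha'\subset\partial U$ of some fixed length $c$, and $\alpha'$ can then slide along the neck to $\partial U\times\{N\}$ through curves of length exactly $c$, never touching both ends at once. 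For $n\ge 5$ the complement of $\overline U$ in $S^n$ is simply connected (by general position, since $2+2<n$), and nothing in your construction controls the length of a contraction there, so $[\alpha]_K$ could well be $0$.

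The paper avoids this by stretching the metric inside the tubular neighbourhood $T$ both radially \emph{and} tangentially: in fibre coordinates $(x,r,\theta)$ the new metric is $g(r)f_1\,dx^2+g(r)f_2\,dr^2+f_3\,d\theta^2$ with $g$ rapidly increasing. The first curve $\beta$ in any contraction of $\alpha$ that touches $\partial T$ then either meets the half-radius level set $S_{l/2}$, giving $\operatorname{length}(\beta)\ge\int_{l/2}^{l}g(r)\,dr>K$, or lies entirely in the outer shell $\{r>l/2\}$, where the tangential factor forces $\operatorname{length}(\beta)\ge g(l/2)\cdot\operatorname{length}(\alpha)>K$ because $\beta$ still projects to a noncontractible loop on $\mathbb{R}P^2$. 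The tangential stretch is exactly what blocks the drift that breaks your argument; adding it to your construction would repair the proof, but at that point you have recovered the paper's construction.
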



\begin{proof}
Consider an embedding $F$ of $\R P^2$ in $\R^n$, $n \geq 4$. 
By the tubular neighbourhood theorem (see, for example, \cite{H})
for a small number $l$ the set $T = \{x : \; dist(x, \R P^2) < l\}$
is diffeomorphic to the normal bundle of the embedding
with fibers $D^{n-2}$, the $n-2$ dimensional disc.

Let $U$ be a small open disc in $\R P^2$ and consider a local coordinate system 
$V=U \times D^{n-2}$.  The chart that we are using takes a point $(x,r,\theta) \in V$, where $x \in U$, $r \in [0,1]$,
$\theta \in S^{n - 3}$, and maps it to
	$$ F(x) + l r \theta, $$
where $lr \theta$ lies in the $n-2$ dimensional normal space to $F(\mathbb{R}P^2)$ at $F(x)$. Let $ds^2 = f_1 dx^2 + f_2 dr^2 +f_3 d \theta ^2$
be the metric induced by the embedding, and let $g(t)$ be a rapidly growing increasing function
 that is equal to $1$ at $0$. Define a new metric $ds_1 ^2=
 g(r) f_1 dx^2 + g(r) f_2 dr^2 +f_3 d \theta ^2$. This extends to a well-defined metric
 on all of $T$ with the property that projection $\pi$ on $\R P^2$ is $1-$Lipschitz
 and the distance from $\R P^2$ to $\partial T$ is at least
	$$ \int_0 ^l g(r) dr >> K. $$

Let $\alpha$ be a systole (shortest non-contractible curve) on $\R P^2$.
Let $L$ be the maximal length of a curve in a contraction of $2 \alpha$ to a point.
Since $T$ retracts onto $\R P^2$, $\alpha$ is not contractible in $T$.
Hence, any homotopy contracting $\alpha$ to a point must go through a curve intersecting
$\partial T$. Let $\beta$ be the first such curve in the homotopy, and
let $S_{\frac{l}{2}} = \{x \in T : \; dist (x, \R P^2) =\frac{l}{2} \}$, where $dist$
is the Euclidean distance. If $\beta$ intersects $S_{\frac{l}{2}}$ then 
$$ length(\beta) \geq \int_\frac{l}{2} ^l g(r) dr > K,$$ 
otherwise $\beta$ lies entirely in the set $\{x \in T : \; dist (x, \R P^2) >\frac{l}{2} \}$
and so
$$length(\beta) \geq  g(\frac{l}{2})length(\alpha) > K.$$

On the other hand, $2 \alpha$ contracts to a point on $\R P^2$
through curves of length $\leq L$.  By rescaling this example, we can choose any value we like for $L$ and $K$.

Since everything happens in some bounded subset of $\mathbb{R}^n$,
the statement of the theorem is true for any smooth manifold.
\end{proof}

In dimension $3$ we prove a somewhat weaker statement.  


\begin{proposition}
\label{prop:3}
For every $K>L>0$, there exists a Riemannian metric on $S^3$
and a simple closed curve $\alpha \subset S^3$ such that $[\alpha]_K > [2 \alpha]_L$.
\end{proposition}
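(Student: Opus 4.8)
The plan is to adapt the construction in the proof of Proposition \ref*{prop:n}, replacing the embedded $\R P^2$ by an immersed one and replacing the \emph{topological} trap used there by a \emph{metric} one. Recall that for $n\ge 4$ everything hinged on the tubular neighbourhood $T$ of an embedded $\R P^2$ satisfying $\pi_1(T)=\mathbb Z/2$, so that the systole $\alpha$ stayed essential in $T$ however the metric was stretched, while $2\alpha$ contracted on $\R P^2$ through short curves. In $S^3$ both features fail: $\R P^2$ does not embed, and — more importantly — no compact $3$-dimensional submanifold of $S^3$ has torsion in its fundamental group (a punctured lens space does not embed in $S^3$, by uniqueness of prime decomposition), so in whatever region $\alpha$ sits it becomes null-homotopic as soon as $2\alpha$ does. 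Hence $\alpha$ must be confined by geometry, not topology, which is why we only expect the weaker inequality $[\alpha]_K>[2\alpha]_L$.

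Concretely, I would proceed as follows. Fix a round metric $h$ on $\R P^2$ of small diameter with systole $\gamma$; lifting to the double cover $S^2$ and sweeping a great circle to a point (as in Proposition \ref*{prop:n} and in the proof of Theorem $1.2'$) shows that, after rescaling $h$, the curve $2\gamma$ contracts in $(\R P^2,h)$ through curves of length $\le L$. Choose a generic smooth immersion $\phi\colon \R P^2\looparrowright \R^3\subset S^3$ (e.g.\ Boy's surface) whose self-intersection set $\Sigma$ is disjoint from $\gamma$, and let $N$ be a closed regular neighbourhood of $Y=\phi(\R P^2)$. Put a metric $g$ on $S^3$ with the following properties: (i) on a neighbourhood of $Y$ the immersed surface is totally geodesic and $\phi$ pulls $g$ back to the rescaled $h$, so that the contraction of $2\gamma$ pushes forward to a contraction of $\alpha:=\phi(\gamma)$ traversed twice through $g$-curves of length $\le L$; (ii) $N$ contains a collar $\partial N_0\times[0,R]$ with $R$ huge and $\alpha$ inside $N_0$, and $g$ is enormous outside $N$, so that any curve meeting $S^3\setminus N_0$ has $g$-length $>K$; and (iii) near $\Sigma$, in the directions normal to $Y$, $g$ contains very long thin collars, so that any loop that ``goes around'' a self-intersection handle of $N$ has $g$-length $>K$. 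Such a $g$ extends smoothly to $S^3$; by construction $[2\alpha]_L$ is as small as we wish, and $\alpha$ is a simple closed curve because $\gamma$ avoids $\Sigma$.

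The heart of the argument is the lower bound on $[\alpha]_K$, and this is where the main obstacle lies. Since $\alpha^2$ is null-homotopic in $N$ and $\pi_1(N)$ is torsion-free, $\alpha$ itself is null-homotopic in $N$, so — unlike in Proposition \ref*{prop:n} — one cannot simply invoke essentiality; instead one must show that \emph{every} contraction of $\alpha$ passes through a curve of length $>K$. The dichotomy is: such a contraction either leaves $N_0$, in which case some intermediate curve is long by (ii); or it stays inside $N$, in which case — because $\alpha$ is essential in the Möbius band neighbourhood of $\gamma$ and in $\R P^2$ — the homotopy is forced to exploit the handles of $N$ created along $\Sigma$, and a quantitative argument (an isoperimetric/area estimate in the long collars of (iii), analysing how the swept singular disk must cross the ``barrier'' surfaces sitting deep in those collars) shows that some intermediate curve is again long. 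Granting this, $[\alpha]_K$ is bounded below by a fixed positive constant, hence exceeds $[2\alpha]_L$, and rescaling the whole metric realises any prescribed $K>L>0$. The delicate point — and the reason the statement is only ``somewhat weaker'' than Proposition \ref*{prop:n} — is precisely this quantitative control on homotopies inside the $3$-manifold $N$: one must choose the immersion $\phi$ and the collars in (iii) carefully enough that contracting $\alpha$ genuinely cannot avoid both a trip out of $N_0$ and a trip around a long handle.
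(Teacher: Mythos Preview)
Your proposal has a genuine gap at precisely the point you flag as ``delicate'': the lower bound on $[\alpha]_K$ when the homotopy stays inside $N$. You correctly observe that $\alpha$ is null-homotopic in $N$, and then assert that any null-homotopy must ``exploit the handles along $\Sigma$'' and that an ``isoperimetric/area estimate in the long collars'' forces a long intermediate curve. But no argument is given, and it is not clear one exists. Once $\alpha$ bounds a singular disk in $N$, what you need is a lower bound on the \emph{width} of every such disk, i.e.\ a min-max lower bound over all sweep-outs --- and such bounds are genuinely hard in dimension $3$, with no obvious barrier surface that every disk on $\alpha$ must cross but some disk on $2\alpha$ avoids. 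There is also a tension between (i) and (iii): the self-intersection locus $\Sigma$ lies on $Y$, and the short contraction of $2\gamma$ in $\R P^2$ may sweep through $\phi^{-1}(\Sigma)$, so stretching the normal collars near $\Sigma$ risks stretching that contraction as well.

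The paper avoids this entirely by a different, purely topological construction. It embeds a flat M\"obius strip $M$ in $\R^3$ and takes $T$ to be the twisted $[-Q,Q]$-bundle over $M$ (equivalently, $[-1,1]\times[-Q,Q]^2$ with $(1,v,w)\sim(-1,-v,-w)$), so that $\pi_1(T)\cong\mathbb Z$ is generated by the core circle $\alpha$. The crucial point is that $\alpha$ remains \emph{essential} in $T$: since $T\setminus M$ retracts to a loop homotopic to $2\alpha$, any curve in the homotopy class of $\alpha$ in $T$ must meet $M$, and the same holds for an ``orthogonal'' M\"obius strip $M'$. The first curve $\beta$ of a contraction to touch $\partial T$ therefore meets both $M$ and $M'$, and a direct distance count gives $\mathrm{length}(\beta)\ge Q$. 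Meanwhile $2\alpha$ slides out of $T$ along the fibres through curves of length at most $2$ and is then shortened outside. So the paper's obstruction is homotopical, not quantitative, and no sweep-out estimate is needed; your $\R P^2$ construction trades this away and is left with exactly the hard step.
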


\begin{proof}
Let $Q$ be a constant much larger than $K$, and let $T$ be a 3-dimensional manifold with boundary 
obtained from the parallelepiped $[-1,1] \times [-Q,Q]^2$ by identifying the
sides 
$-1 \times [-Q,Q]^2$ and $1 \times [-Q,Q]^2$ by a map 
$p: (1, v,w) \rightarrow (-1, -v,-w)$.
Let $f$ be an embedding of $T$ in $\R^3$ and modify the metric on $\R^3$
so that the embedding is an isometry.
Note that if we restrict $f$ to the  central slice $M=[-1,1] \times (-Q,Q) \times 0$
module $p$, we obtain an embedding of the M\"{o}bius strip.
The interior of $T$ is then diffeomorphic
to the normal bundle for this embedding.
We can think of $T$ as a fiber bundle over $M$ with fibers $[-Q,Q]$.
 
Let $\alpha$ be the central circle of $M$. Consider a homotopy contracting $\alpha$ to
any curve of length $<1$. 
Since $T$ can be retracted to $M$, there is no such curve in the homotopy class of $\alpha$ in $T$,
and so there will be a curve $\beta$ in the homotopy
that intersects $\partial T$ for the first time.  We make two observations.  First, since
$T \setminus M$ can be retracted to a curve homotopic to $2 \alpha$ inside $T$,
and since $\beta$ is not homotopic to any multiple of $2 \alpha$ inside $T$, $\beta$ must intersect $M$.
Second, if $M' = [-1,1] \times 0 \times [-Q, Q]$, then $T \setminus M'$ also has this property,
and so $\beta$ intersects $M'$ as well.  If $\beta$ intersects $f(M \times \partial [-Q, Q])$, then since $\beta$
goes through $M$, $\beta$ must be at least $Q$ in length.  Similarly, if $\beta$ intersects $f(\partial M \times [-Q,Q])$,
then since it goes through $M'$, it must also be at least $Q$ in length.

Consider $2 \alpha$. It can escape $T$ without exceeding length $2$.
Let $\gamma$ be a curve linked with $M$, but not intersecting $M$, as in Figure \ref*{mobius}. 
By modifying the metric in a small neighbourhood of $\gamma$, we can make 
$length(\gamma) < \epsilon$ for any $\epsilon > 0$, and we can homotope $2 \alpha$ to $\gamma$
through curves of length $< 100$. By scaling this example appropriately we obtain 
 the result for any values of $K$ and $L$.  Since we have only locally modified the metric, this holds for
any three dimensional manifold.

\begin{figure}[center] 
\includegraphics[scale=0.4]{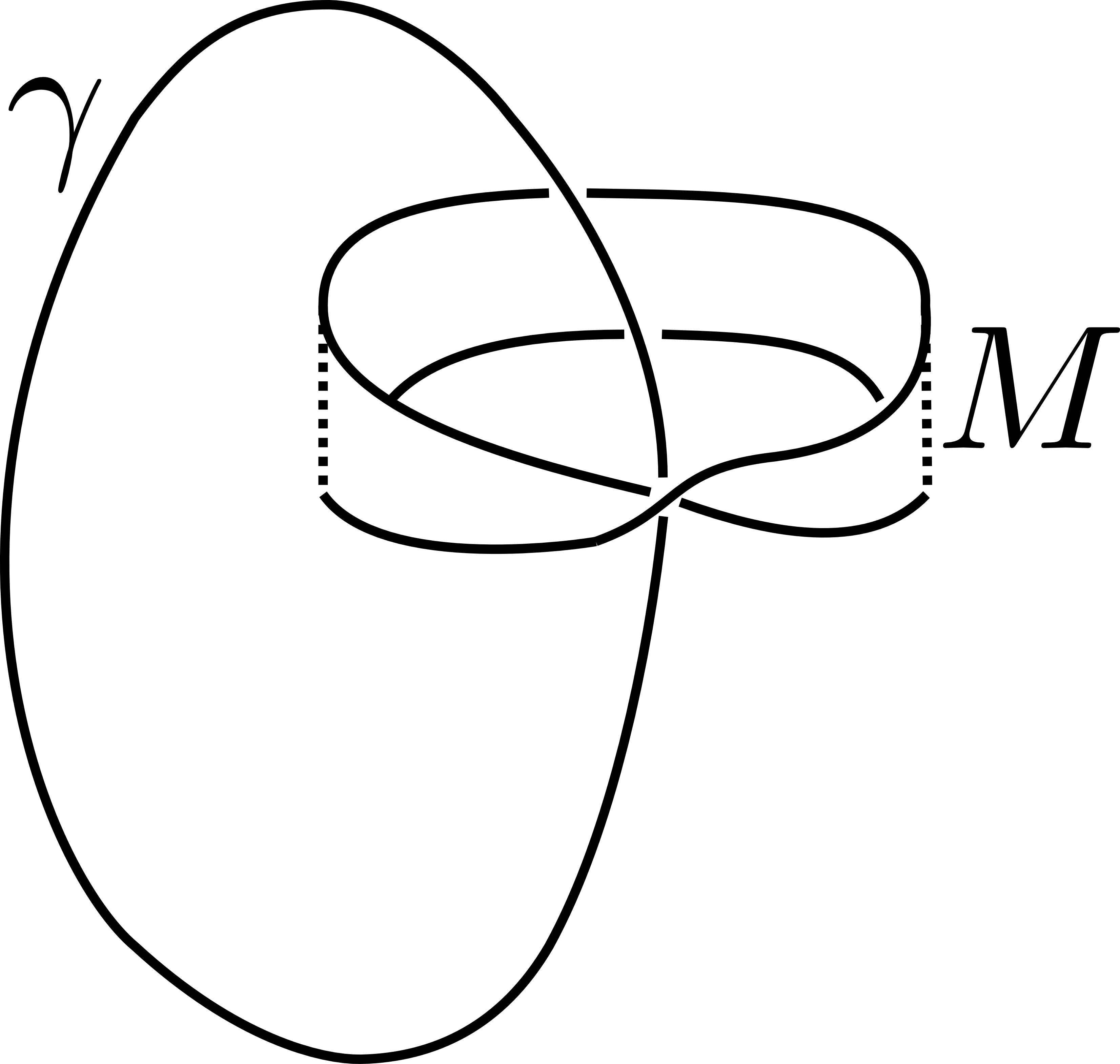} 
\caption{A curve $\gamma$ linked with $M$} \label{mobius}
\end{figure}
\end{proof}


\begin{tabbing}
\hspace*{7.5cm}\=\kill
Gregory R. Chambers                    \> Yevgeny Liokumovich\\
Department of Mathematics           \> Department of Mathematics\\
University of Toronto               \> University of Toronto\\
Toronto, Ontario M5S 2E4            \> Toronto, Ontario M5S 2E4\\
Canada                              \> Canada\\
e-mail: chambers@math.utoronto.ca   \> e-mail: liokumovich@math.utoronto.ca\\
\end{tabbing}

\end{document}